\numberwithin{equation}{section}
\newtheorem{lemma}{Lemma}[section]
\newtheorem{theorem}[lemma]{Theorem}
\newtheorem{prop}[lemma]{Proposition}
\newtheorem{cor}[lemma]{Corollary}
\newtheorem{defn}{Definition}
\theoremstyle{remark}
\begin{document}

\title{Subset Sums of Quadratic Residues over Finite Fields \thanks{Research  is supported
  in part by 973 Program (2013CB834203),  National Natural Science Foundation of China under Grant No.61202437 and 11471162, in part by Natural Science Basic Research Plan in Shaanxi Province of China under Grant No.2015JM1022 and Natural Science Foundation of the Jiangsu Higher Education Institutes of China under Grant No.13KJB110016.} }
\author{\small Weiqiong Wang$^{a}$\ \ Li-Ping Wang$^{b\dag}$\ \ Haiyan Zhou$^c$\\ \small $a.$ School of Science,
Chan'an University, Xi'an 710064, China \\
\small Email: wqwang@chd.edu.cn\\
\small $b.$ Institute of Information Engineering, Chinese Academy of Sciences
Beijing 100093, China\\
\small Email: wangliping@iie.ac.cn\\
\small $c.$ School of Mathematics, Nanjing Normal University, Nanjing 210023, China\\ \small Email: haiyanxiaodong@gmail.com}
\date{}
\maketitle
\thispagestyle{empty}
\begin{abstract}

In this paper, we derive an explicit combinatorial formula for the number of $k$-subset sums of quadratic residues over
 finite fields.
\end{abstract}
\small\textbf{Keywords:} Subset sums, quadratic residue, character sum, distinct coordinate sieve

\section{Introduction}
Let $\mathbb{F}_q$ be the finite field with $q=p^s$ elements, where $p$ is a prime and $s\geq 1$ is an integer. Let $H$ be a subset of $\mathbb{F}_{q}$, and $k(1\leq k\leq |H|)$ be a positive integer. For $b\in \mathbb{F}_{q}$, let $N_H(k,b)$ denotes the number of  $k$-element subsets $S\subseteq H$ such that
\begin{equation}
\sum_{a\in S}a=b.
\end{equation}
Understanding the number $N_H(k,b)$
is the well known $k$-subset sum problem over finite fields. It arises from several applications in coding theory, cryptography, graph theory and some other fields. For example, it is directly
related to the deep hole problem of generalized Reed-Solomon codes with evaluation set $H$ \cite{ChenMur07,ChenWan07,ChenWan10,GGG15}.
It is also related to the spectrum and the diameter of the Wenger type graphs \cite{CaoLu15}.

 However, the $k$-subset sum problem over finite fields for general $H$ is well known to be NP-hard.  The difficulty mainly comes from the combinatorial flexibility of choosing the subset $H$ and also the lack of algebraic structure of $H$.
 Due to the NP-hardness, there is little that we can say about the  exact value of $N_H(k,b)$ in general. But  from mathematic point of view, we would like to obtain an explicit formula or at least an asymptotic formula for $N_H(k,b)$.  This is again out of our expectation in general. But if $H$ is certain special subset with good algebraic structure,
 one can hope to obtain the exact value or asymptotic formula for $N_H(k,b)$. For example, it is known that if $\mathbb{F}_{q}-H$ is a small set, there is a simple asymptotic formula for  $N_H(k,b)$\cite{LiW08}. In addition, if $H=\mathbb{F}_{q}$, or $\mathbb{F}_{q}^*$, or any additive subgroup of $\mathbb{F}_{q}$, there is also an explicit combinatorial
 formula for  $N_H(k,b)$\cite{LiW08,LiW10,LiWan12}.

If $H$ is a multiplicative subgroup of $\mathbb{F}_{q}$ of index $m$ (thus $m$ divides $q-1$), the subset sum problem becomes harder as it is a non-linear algebraic problem with many combinatorial constraints. Zhu and Wan \cite{ZhuW12} provided  an asymptotic formula for $N_H(k,b)$ in this case.
As a consequence, they proved that for small index $m=[\mathbb{F}_{q}^*:H]$ and $6\ln q<k<\frac{q-1}{2m}$, $N_H(k,b)>0$ for all $b\in \mathbb{F}_{q}$. This is
the only known result in the case that $H$ is a proper multiplicative subgroup.

The complexity of the subset sum problem grows as the index $m$ of the subgroup $H$ grows. In the simplest case $m=1$, then $H= \mathbb{F}_q^*$ and an explicit
combinatorial formula for $N_H(k,b)$ is known. In this paper, we study the next simplest case $m=2$. Our main result is an
explicit combinatorial formula for $N_H(k,b)$,  where $H$ is the subgroup of quadratic residues in $\mathbb{F}_q^*$, that is, $H=\{x^2\mid x\in \mathbb{F}_{q}^*\}$.
Equivalently, we obtain an explicit combinatorial formula for
  \begin{equation}
  N_H(k,b)=\frac{1}{k!}\sharp \{(y_1, y_2, \cdots, y_k)\in H^k\mid y_1+y_2+\cdots +y_k=b, y_i\neq y_j\textrm{ for }\forall \ i\neq j\}.
\end{equation}
Note that there is the coefficient $\frac{1}{k!}$ because $N_H(k,b)$ denotes the number of the unordered $k$-tuples with distinct coordinates satisfying the equation $ y_1+y_2+\cdots +y_k=b$ with $y_i\in H$. When $m\geq 3$, one should not expect an explicit formula for $N_H(k,b)$.

Our main tools in this paper are the new sieve \cite{LiW10}, some combinatorial properties and the standard character sums over finite fields. Our technique is to find out the exact number of points with nonzero coordinates on quadratic diagonal equations  first, and then sieve twice to obtain our desired results. Our formula is more complicated in the
case that $s=[\mathbb{F}_q :\mathbb{F}_p]$ is odd, but greatly simplified in the case $s= [\mathbb{F}_q :\mathbb{F}_p]$ is even.

\section{Preliminary}

In this section, we review some basic properties of Gauss-Jacobi sums that will be used in the following sections.

 A multiplicative character on $\mathbb{F}_{q}^{*}$ is a map $\chi$ from $\mathbb{F}_{q}^{*}$ to the nonzero complex numbers set $\mathbb{C}^*$ which satisfies $\chi(xy)=\chi(x)\chi(y)$ for all $x,y\in \mathbb{F}_{q}^{*}$. We extend the definition to the whole field $\mathbb{F}_{q}$ by defining
 $$
 \chi(0)=\left\{
\begin{array}{ll}1 &\textrm{ if $\chi = 1$,} \\  0 &\textrm{ otherwise.}
\end{array}\right.
 $$
\begin{defn}
  Let $\chi$ be a multiplicative character on $\mathbb{F}_{q}$ and $a \in \mathbb{F}_{q}$. Set
  $$
   G_a(\chi)=\sum_{t\in \mathbb{F}_{q}}\chi(t)\zeta^{Tr(at)},
  $$
where $\zeta=e^{\frac{2\pi i}{p}}$ and Tr denotes the trace from $\mathbb{F}_{q}$ to $\mathbb{F}_{p}$. We call $G_a(\chi)$ the Gauss sum on  $\mathbb{F}_{q}$, and usually denote $G_1(\chi)$
by $G(\chi)$.
\end{defn}

\begin{prop}\cite{LidlN}
Let $\mathbb{F}_{q}$ be a finite field with $q=p^s$, where $p$ is an odd prime and $s\in \mathbb{N}$. Let $\chi$ be the quadratic character of $\mathbb{F}_{q}$. Then we have
\begin{equation}
G(\chi)=\left\{
\begin{array}{ll}(-1)^{s-1}q^{1/2} &\textrm{ if $p\equiv 1($mod $4)$}, \\  (-1)^{s-1}i^s q^{1/2} &\textrm{ if $p\equiv 3($mod $4)$},
\end{array}\right.
\end{equation}
where $i=\sqrt{-1}.$
\end{prop}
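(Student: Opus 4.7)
The plan is to reduce the general case $q = p^s$ to the prime field case $\mathbb{F}_p$ by means of the Hasse--Davenport lifting relation, and then to compute the Gauss sum of the quadratic character on $\mathbb{F}_p$ directly. Since Proposition~2.2 is a classical theorem of Gauss together with a standard lifting argument, my goal is less to find new ideas than to lay out the standard pieces in the correct order.

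First I would establish the orthogonality identity $|G(\chi)|^2 = q$ for any nontrivial multiplicative character $\chi$ on $\mathbb{F}_q$: expanding
$$
G(\chi)\overline{G(\chi)}=\sum_{t,u\in\mathbb{F}_q}\chi(t)\overline{\chi(u)}\,\zeta^{\mathrm{Tr}(t-u)},
$$
substituting $t=us$, and using the orthogonality $\sum_u\zeta^{\mathrm{Tr}(u(s-1))}=q\cdot\mathbf{1}_{s=1}$, collapses the sum to $q$. This pins down $G(\chi)$ up to a complex number of modulus $q^{1/2}$, which already isolates the hard part to a sign (or a fourth root of unity) determination.

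Next I would treat the base case $s=1$. Let $\chi_0$ be the quadratic character of $\mathbb{F}_p$. A direct manipulation shows
$$
G(\chi_0)^2 = \chi_0(-1)\,p = (-1)^{(p-1)/2}p,
$$
so $G(\chi_0) = \pm p^{1/2}$ when $p\equiv 1\pmod 4$ and $G(\chi_0) = \pm i\,p^{1/2}$ when $p\equiv 3\pmod 4$. The genuinely delicate step is showing that the sign is always $+$ -- this is Gauss's celebrated sign problem. I would follow one of the standard routes: either the Schaar--Landsberg argument evaluating a finite product of sines, or the cleaner approach identifying $G(\chi_0)$ with the value at $1$ of an explicit product built from the cyclotomic polynomial and computing its argument directly. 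I expect this to be by far the main obstacle; everything above and below is formal, while this step is the one whose proof Gauss himself delayed for several years.

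Finally, for general $s\ge 1$, I would apply the Hasse--Davenport relation lifting characters from $\mathbb{F}_p$ to $\mathbb{F}_q=\mathbb{F}_{p^s}$. Since the quadratic character of $\mathbb{F}_q$ equals $\chi_0 \circ N_{\mathbb{F}_q/\mathbb{F}_p}$, the relation reads
$$
G(\chi) = (-1)^{s-1}\,G(\chi_0)^{s}.
$$
Plugging in the base-case values gives $(-1)^{s-1}q^{1/2}$ when $p\equiv 1\pmod 4$ and $(-1)^{s-1}i^{s}q^{1/2}$ when $p\equiv 3\pmod 4$, matching the statement. So, modulo the one hard classical input (the Gauss sign), the proof is a two-line calculation plus one invocation of Hasse--Davenport.
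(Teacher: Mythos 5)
The paper offers no proof of this proposition: it is quoted directly from Lidl--Niederreiter (Theorem 5.15 there), whose proof is exactly the route you outline --- evaluate the prime-field quadratic Gauss sum, including Gauss's sign determination, and lift via the Davenport--Hasse relation $G(\chi_0\circ N_{\mathbb{F}_q/\mathbb{F}_p})=(-1)^{s-1}G(\chi_0)^{s}$. Your outline is correct, including the identification $\chi=\chi_0\circ N_{\mathbb{F}_q/\mathbb{F}_p}$ and the final bookkeeping in both congruence classes of $p$; the only step you do not actually carry out is the sign of $G(\chi_0)$, which you rightly isolate as the one genuinely hard classical input, and any of the standard arguments you name would close it.
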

\begin{defn}
Let $\chi_1,\cdots , \chi_{n}$ be multiplicative characters on $\mathbb{F}_{q}$. We define the following Jacobi type sum by
$$J(\chi_1,\cdots , \chi_{n})=\sum_{y_1+y_2+\cdots + y_n =1}\chi_1(y_1)\cdots \chi_n(y_n).$$
$$J^*(\chi_1,\cdots , \chi_{n})=\mathop{\sum_{y_1+y_2+\cdots + y_n =1}}_{{y_i\neq 0}}\chi_1(y_1)\cdots \chi_n(y_n).$$
$$J_0(\chi_1,\cdots , \chi_{n})=\sum_{y_1+y_2+\cdots + y_n =0}\chi_1(y_1)\cdots \chi_n(y_n).$$
$$J_0^*(\chi_1,\cdots , \chi_{n})=\mathop{\sum_{y_1+y_2+\cdots + y_n =0}}_{{y_i\neq 0}}\chi_1(y_1)\cdots \chi_n(y_n).$$
\end{defn}

These Jacobi type sums have the following properties.
\begin{prop}\cite{ZhuW12}
If $\chi_{i_1}=\cdots=\chi_{i_e}=1$, and  $\chi_{i_{e+1}}\neq 1,\cdots, \chi_{i_{n}}\neq 1$. Then
\begin{equation}
J_0(\chi_1,\cdots , \chi_{n})=\left\{
\begin{array}{ll}q^{n-1} &\textrm{ if $e=n $}, \\  0 &\textrm{ if $1\leq e< n$}, \\ 0 &\textrm{ if $e=0, \chi_1\cdots\chi_{n}\neq 1$}, \\
\chi_n(-1)(q-1)J(\chi_1,\cdots , \chi_{n-1}) &\textrm{otherwise}.
\end{array}\right.
\end{equation}
\begin{equation}
J^*(\chi_1,\cdots , \chi_{n})=\left\{
\begin{array}{ll}\frac{1}{q}[(q-1)^{n}-(-1)^n] &\textrm{ if $e=n $}, \\  (-1)^eJ(\chi_{i_{e+1}},\cdots, \chi_{i_{n}}) &\textrm{ if $0\leq e< n$}.
\end{array}\right.
\end{equation}
\begin{equation}
J_0^*(\chi_1,\cdots , \chi_{n})=\left\{
\begin{array}{ll}\frac{1}{q}[(q-1)^{n}+(q-1)(-1)^{n}] &\textrm{ if $e=n $}, \\  (-1)^eJ_0(\chi_{i_{e+1}},\cdots, \chi_{i_{n}}) &\textrm{ if $0\leq e< n$}.
\end{array}\right.
\end{equation}
\end{prop}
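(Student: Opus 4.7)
The plan is to establish the three case formulas by combining character orthogonality with a single scaling substitution and one inclusion-exclusion. I start with $J_0$. The case $e=n$ is the trivial count $q^{n-1}$ of tuples $(y_1,\ldots,y_n)\in\mathbb{F}_q^n$ summing to $0$. For $1\le e<n$, WLOG take $\chi_1=\cdots=\chi_e=1$ and $\chi_{e+1},\ldots,\chi_n$ nontrivial. The linear constraint $\sum y_i=0$ fixes one of the ``trivial'' coordinates in terms of the others, so
$$J_0(\chi_1,\ldots,\chi_n)=q^{e-1}\prod_{j=e+1}^{n}\sum_{y_j\in\mathbb{F}_q}\chi_j(y_j)=0,$$
since every inner sum vanishes. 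For $e=0$ with $\chi_1\cdots\chi_n\ne 1$, pick $t\in\mathbb{F}_q^*$ with $(\chi_1\cdots\chi_n)(t)\ne 1$; the bijection $y_i\mapsto ty_i$ preserves $\{\sum y_i=0\}$ and multiplies $J_0$ by $(\chi_1\cdots\chi_n)(t)$, forcing $J_0=0$. In the ``otherwise'' case (all $\chi_i$ nontrivial and $\chi_1\cdots\chi_n=1$) the terms with $y_n=0$ vanish automatically, and for $y_n\ne 0$ I substitute $y_i=-y_n u_i$ for $i<n$, turning $\sum y_i=0$ into $\sum_{i<n}u_i=1$. Collecting the character factors and using $\chi_1\cdots\chi_n=1$ together with $\chi_n(-1)^2=1$ reduces the product to $\chi_n(-1)\prod_{i<n}\chi_i(u_i)$, and summing over $y_n\in\mathbb{F}_q^*$ yields $\chi_n(-1)(q-1)J(\chi_1,\ldots,\chi_{n-1})$.

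For $J^*$ and $J_0^*$ I would apply inclusion-exclusion on which $y_i$ with $i$ in the ``trivial'' index set $I=\{i_1,\ldots,i_e\}$ are zero; for $i\notin I$ the condition $y_i\ne 0$ is automatic because $\chi_i(0)=0$. Expanding $\mathbf{1}[y_i\ne 0\;\forall i\in I]=\prod_{i\in I}(1-\mathbf{1}[y_i=0])$ gives
$$J^*(\chi_1,\ldots,\chi_n)=\sum_{T\subseteq I}(-1)^{|T|}J(\chi_j:j\notin T),\qquad J_0^*(\chi_1,\ldots,\chi_n)=\sum_{T\subseteq I}(-1)^{|T|}J_0(\chi_j:j\notin T).$$
A supporting lemma takes care of most of these terms: whenever the list $(\chi_j:j\notin T)$ contains at least one trivial and one nontrivial character, both $J$ and $J_0$ of that list vanish. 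For $J_0$ this is the $1\le e<n$ clause just proved; for $J$ it is a one-line calculation, since summing out a ``free'' trivial-character variable factorises $J$ as $\prod\sum_{y\in\mathbb{F}_q}\chi_i(y)$, killed by any nontrivial factor. Consequently, provided $I^c\ne\emptyset$, only $T=I$ survives and the formulas collapse to $(-1)^e J(\chi_{i_{e+1}},\ldots,\chi_{i_n})$ and $(-1)^e J_0(\chi_{i_{e+1}},\ldots,\chi_{i_n})$ respectively.

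The remaining $e=n$ branches (all characters trivial) reduce to a pure count of nonzero tuples summing to a prescribed value, which I would handle by an additive-character expansion:
$$\#\{(y_1,\ldots,y_n)\in(\mathbb{F}_q^*)^n:\,y_1+\cdots+y_n=c\}=\frac{1}{q}\sum_{t\in\mathbb{F}_q}\zeta^{-\mathrm{Tr}(tc)}\Bigl(\sum_{y\ne 0}\zeta^{\mathrm{Tr}(ty)}\Bigr)^n.$$
For $t\ne 0$ the inner sum equals $-1$, and splitting the $t=0$ and $t\ne 0$ contributions gives $\tfrac{1}{q}[(q-1)^n-(-1)^n]$ when $c=1$ and $\tfrac{1}{q}[(q-1)^n+(q-1)(-1)^n]$ when $c=0$, matching both $e=n$ clauses. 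I expect the main obstacle to be purely organisational: one has to state the ``mixed triviality $\Rightarrow 0$'' lemma cleanly enough to collapse the inclusion-exclusion to a single surviving term, after which the rest of the proof is either direct counting or the single scaling substitution of the ``otherwise'' case.
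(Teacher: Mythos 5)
Your proposal is correct and complete. Note, however, that the paper itself gives no proof of this proposition: it is quoted verbatim from Zhu and Wan \cite{ZhuW12}, so there is no in-paper argument to compare against. Your three ingredients --- the scaling substitution $y_i\mapsto ty_i$ and the change of variables $y_i=-y_nu_i$ for the $J_0$ clauses (this is essentially Theorem 5.20 of Lidl--Niederreiter), the inclusion--exclusion $\prod_{i\in I}(1-\mathbf{1}[y_i=0])$ over the trivial-character indices together with the ``mixed list $\Rightarrow 0$'' vanishing lemma to collapse it to the single term $T=I$, and the additive-character count for the all-trivial branches --- are exactly the standard route to these identities and each step checks out (in particular the sign bookkeeping $(-1)^{|I|}=(-1)^e$ and the identity $\chi_n^{-1}(-1)=\chi_n(-1)$). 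This supplies a proof the paper omits rather than an alternative to one it contains.
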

Based on the above two propositions, we can derive the following conclusion.
\begin{lemma}
Let $\mathbb{F}_{q}$ be a finite field with $q=p^s$, where $p$ is an odd prime and $s\in \mathbb{N}$. Let $e\in \mathbb{N}$, and  $\chi$ be the nontrivial quadratic character of $\mathbb{F}_{q}$.\\
$(1)$ If either  $p\equiv 1($mod $4)$, or $p\equiv 3($mod $4)$ and $s$ is even, then
$$
J(\underbrace{\chi,\cdots , \chi}_{e})=\left\{
\begin{array}{ll}q^{\frac{e-1}{2}} &\textrm{ $e$ odd}, \\  -q^{\frac{e}{2}-1} &\textrm{  $e$ even}.
\end{array}\right.$$
$(2)$ If $p\equiv 3($mod $4)$ and $s$ is odd, then
$$J(\underbrace{\chi,\cdots , \chi}_{e})=\left\{
\begin{array}{ll}(-q)^{\frac{e-1}{2}} &\textrm{$e$ odd }, \\  (-q)^{\frac{e}{2}-1} &\textrm{ $e$ even}.
\end{array}\right.$$

\end{lemma}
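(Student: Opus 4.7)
The plan is to reduce both parts to the standard Gauss--Jacobi identities, using Proposition 2.2 and Proposition 2.4 as the only external inputs. The key observation is that for the quadratic character $\chi$ one has $G(\chi)^{2}=\chi(-1)\,q$, which follows by squaring the formulas in Proposition 2.2. Moreover, computing $q=p^{s}\bmod 4$ shows that $\chi(-1)=1$ exactly in the situations of part $(1)$ and $\chi(-1)=-1$ exactly in those of part $(2)$. Writing $\epsilon:=\chi(-1)$, the two claimed formulas merge into the uniform statement
\[
 J(\underbrace{\chi,\ldots,\chi}_{e}) \;=\;
 \begin{cases}
 (\epsilon q)^{(e-1)/2}, & e \text{ odd},\\[2pt]
 -\,\epsilon^{e/2}\, q^{\,e/2-1}, & e \text{ even},
 \end{cases}
\]
which I will actually prove; matching $\epsilon=-1$ back to $(-q)^{e/2-1}$ in part $(2)$ is the trivial identity $-(-1)^{e/2}=(-1)^{e/2-1}$.

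For the odd case, $\chi^{e}=\chi$ is nontrivial, so the classical identity $J(\chi_{1},\ldots,\chi_{n})=G(\chi_{1})\cdots G(\chi_{n})/G(\chi_{1}\cdots\chi_{n})$ applies and immediately yields $J(\chi,\ldots,\chi)=G(\chi)^{e-1}=(\epsilon q)^{(e-1)/2}$, which is the odd half of the uniform statement.

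For the even case, $\chi^{e}=1$ blocks that shortcut, so I will expand
\[
 G(\chi)^{e}=\sum_{t_{1},\ldots,t_{e}\neq 0}\chi(t_{1}\cdots t_{e})\,\zeta^{\mathrm{Tr}(t_{1}+\cdots+t_{e})}
\]
and group by $s=t_{1}+\cdots+t_{e}$. The $s\neq 0$ terms are handled by the substitution $t_{i}=su_{i}$ (legal because $\chi^{e}(s)=1$), which together with the collapse $\sum_{s\neq 0}\zeta^{\mathrm{Tr}(s)}=-1$ and the equalities $J^{*}=J$, $J_{0}^{*}=J_{0}$ coming from Proposition 2.4 (with $e=0$ trivial factors in its notation) produce the identity
\[
 G(\chi)^{e}\;=\;J_{0}(\chi,\ldots,\chi)\;-\;J(\chi,\ldots,\chi).
\]
Next I apply the ``otherwise'' branch of the $J_{0}$-formula in Proposition 2.4 to rewrite $J_{0}$ of $e$ copies of $\chi$ as $\epsilon(q-1)$ times $J$ of $e-1$ copies, and substitute the odd-case value $(\epsilon q)^{(e-2)/2}$ already in hand. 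Combined with $G(\chi)^{e}=(\epsilon q)^{e/2}$ this gives a single linear equation for the sought Jacobi sum, which solves to $-\epsilon^{e/2}q^{e/2-1}$, matching the even half of the uniform statement.

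The main obstacle is not conceptual but purely algebraic bookkeeping: pinning down $\chi(-1)$ correctly in each subcase of Proposition 2.2, tracking the sign of $\epsilon^{e/2}$ through the even-case calculation, and confirming that the uniform answer specializes to the two asymmetric-looking branches written in the lemma. No input beyond Propositions 2.2 and 2.4 and the standard $J=G(\chi_{1})\cdots G(\chi_{n})/G(\chi_{1}\cdots\chi_{n})$ identity is required.
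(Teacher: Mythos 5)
Your proof is correct, and it diverges from the paper only in the even case. For $e$ odd you take exactly the paper's route: $J(\chi,\ldots,\chi)=G(\chi)^{e}/G(\chi)=\bigl(G(\chi)^{2}\bigr)^{(e-1)/2}$ with $G(\chi)^{2}=\chi(-1)q$ read off from the Gauss-sum evaluation (note your ``Proposition 2.2'' is the paper's Proposition 2.1 and your ``Proposition 2.4'' is its Proposition 2.2). For $e$ even the paper simply quotes Theorem 5.21 of Lidl--Niederreiter, namely $J(\underbrace{\chi,\ldots,\chi}_{e})=-\chi(-1)J(\underbrace{\chi,\ldots,\chi}_{e-1})$, and substitutes the odd case; you instead re-derive that recursion: expanding $G(\chi)^{e}$ and splitting on $t_{1}+\cdots+t_{e}=0$ versus $\neq 0$ (the substitution $t_{i}=su_{i}$ is indeed legitimate because $\chi^{e}=1$, and $J^{*}=J$, $J_{0}^{*}=J_{0}$ is the $e=0$ branch of the Jacobi-sum proposition) gives $G(\chi)^{e}=J_{0}(\chi,\ldots,\chi)-J(\chi,\ldots,\chi)$, and then $J_{0}=\chi(-1)(q-1)J(\underbrace{\chi,\ldots,\chi}_{e-1})$ together with $G(\chi)^{e}=(\epsilon q)^{e/2}$ yields $\epsilon(q-1)(\epsilon q)^{(e-2)/2}-(\epsilon q)^{e/2}=-\epsilon^{e/2}q^{e/2-1}$, which is precisely what the cited theorem would give. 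Your version buys self-containedness --- only the two stated propositions plus the standard $J=G(\chi_{1})\cdots G(\chi_{n})/G(\chi_{1}\cdots\chi_{n})$ identity are needed --- at the cost of the extra Gauss-sum bookkeeping, while the paper buys brevity by outsourcing the recursion; your uniform packaging via $\epsilon=\chi(-1)$ (equal to $1$ exactly in case $(1)$ and to $-1$ exactly in case $(2)$, since $\chi(-1)=1$ iff $q\equiv 1 \pmod 4$) also tidies the paper's concluding case discussion, and the final sign checks $-\epsilon^{e/2}q^{e/2-1}=(-q)^{e/2-1}$ for $\epsilon=-1$ all come out right.
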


\begin{proof}
 Based on proposition $2.1$, and the relationships between Gauss sum and Jacobi sum, we can prove that if $e$ is odd,
$$
J(\underbrace{\chi,\cdots , \chi}_{e})=\frac{G(\chi)^e}{G(\chi)}=\left(G(\chi)^2\right)^{\frac{e-1}{2}}=\left\{
\begin{array}{ll}q^{\frac{e-1}{2}} &\textrm{ if $p\equiv 1($mod $4)$}, \\  (-1)^{\frac{s(e-1)}{2}}q^{\frac{e-1}{2}} &\textrm{ if $p\equiv 3($mod $4)$}.
\end{array}\right.$$

On the other hand, if $e$ is even, we know from Theorem $5.21$ in \cite{LidlN} that $$J(\underbrace{\chi,\cdots , \chi}_{e})=-\chi(-1)\cdot J(\underbrace{\chi,\cdots , \chi}_{e-1}).$$
 Since $\chi(-1)=1$ for $p\equiv 1($mod $4)$ and $\chi(-1)=(-1)^s$ for $p\equiv 3($mod $4)$, it follows that in this case
$$J(\underbrace{\chi,\cdots , \chi}_{e})=\left\{
\begin{array}{ll}-q^{\frac{e}{2}-1} &\textrm{ if $p\equiv 1($mod $4)$}, \\  -(-1)^{\frac{se}{2}}q^{\frac{e}{2}-1} &\textrm{ if $p\equiv 3($mod $4)$}.
\end{array}\right.$$

Finally we arrive at the desired results by discussing on $s$ and $e$.
\end{proof}

\section{Counting points on quadratic equations}

A diagonal equation over $\mathbb{F}_{q}$ is an equation of the form
\begin{equation}
a_1 x_1^{k_1}+\cdots + a_n x_n^{k_n}=b
\end{equation}
with positive integers $k_1,\cdots, k_n$, and $a_1,\cdots, a_n\in \mathbb{F}^*_{q}$, $b\in \mathbb{F}_{q}$.
The number of solutions in $\mathbb{F}_q$ of this kind of diagonal equations can be expressed by Jacobi type sums, and the precise number of solutions can also be obtained when $k_1=k_2=\cdots=k_n=2$\cite{LidlN}. However,
sometimes we only want solutions in $\mathbb{F}_{q}^*$. So in this paper, we first calculate the number of solutions with nonzero coordinates of quadratic diagonal equations. We denote it by
\begin{equation}
     N^{*}(n,b)=\sharp\{\left(x_1,x_2,\cdots,x_{n}\right)\in (\mathbb{F}_{q}^{*})^{n}\mid a_1 x_1^{2}+a_2 x_2^{2}+\cdots +a_n x_n^{2}=b\},
\end{equation}
where $a_1,\cdots, a_n\in \mathbb{F}^*_{q}$, and $b\in \mathbb{F}_{q}$.

Obviously, the equation in $(3.2)$  reduces to a  linear equation if the characteristic $p$ of $\mathbb{F}_{q}$ is $2$ since it is a square. So in the following sections, we assume the characteristic $p$ is odd, which is all we need since $2$ divides $q-1$ in our applications. We provide firstly some lemmas that will be used in the following theorems.

\begin{lemma} Let $n$ and $m$ be two positive integers,  $m<n$. Let $f(x)=\left(1-\sqrt{q}x\right)^m\cdot\left(1+\sqrt{q}x\right)^{n-m}$. Then we have the following results:\\
\begin{eqnarray*}
& &\mathop{\sum_{e=1}^n}_{\textrm{$e$ even}}\sum_{j=0}^m(-1)^{j}\binom{m}{j}\binom{n-m}{e-j}(\sqrt{q})^e
=\frac{1}{2}\left[f(x)+f(-x)\right]\mid_{x=1}-1;\\
& &\mathop{\sum_{e=1}^n}_{\textrm{$e$ odd}}\sum_{j=0}^m(-1)^{j}\binom{m}{j}\binom{n-m}{e-j}(\sqrt{q} )^e
=\frac{1}{2}\left[f(x)-f(-x)\right]\mid_{x=1};\\
& &\mathop{\sum_{e=1}^n}_{4\mid e}\sum_{j=0}^m(-1)^{j}\binom{m}{j}\binom{n-m}{e-j}(\sqrt{q} )^e
=\frac{1}{4}\left[f(x)+f(-x)+f(-ix)+f(ix)\right]\mid_{x=1}-1;\\
& &\mathop{\sum_{e=1}^n}_{\textrm{$e\equiv 1$ (mod $4)$}}\sum_{j=0}^m(-1)^{j}\binom{m}{j}\binom{n-m}{e-j}(\sqrt{q} )^e
=\frac{1}{4}\left[f(x)-f(-x)+if(-ix)-if(ix)\right]\mid_{x=1};\\
& &\mathop{\sum_{e=1}^n}_{\textrm{$e\equiv 2$ (mod $4)$}}\sum_{j=0}^m(-1)^{j}\binom{m}{j}\binom{n-m}{e-j}(\sqrt{q} )^e
=\frac{1}{4}\left[f(x)+f(-x)-f(-ix)-f(ix)\right]\mid_{x=1};\\
& &\mathop{\sum_{e=1}^n}_{\textrm{$e\equiv 3$ (mod $4)$}}\sum_{j=0}^m(-1)^{j}\binom{m}{j}\binom{n-m}{e-j}(\sqrt{q} )^e
=\frac{1}{4}\left[f(x)-f(-x)-if(-ix)+if(ix)\right]\mid_{x=1},
\end{eqnarray*}
where $i=\sqrt{-1}$.
\end{lemma}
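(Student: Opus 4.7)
The plan is a direct roots-of-unity extraction. Expanding $f(x)$ by the binomial theorem and collecting like powers of $x$, I would first write
\[
f(x) \;=\; \sum_{e=0}^{n} c_e\, x^e, \qquad c_e \;=\; (\sqrt{q})^e \sum_{j=0}^{m} (-1)^j \binom{m}{j}\binom{n-m}{e-j},
\]
so that $c_0 = 1$ and each inner sum appearing on the left-hand side of the lemma is exactly $c_e$. Every one of the six displayed identities then reduces to computing $\sum_{e \in S} c_e$ where $S \subseteq \{0,1,\dots,n\}$ is cut out by a congruence condition on $e$, with $e = 0$ removed in those cases where the lemma indexes $e$ starting from a positive value.

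For the first two identities, I would invoke the standard parity filter
\[
\sum_{e \text{ even}} c_e \;=\; \tfrac{1}{2}\bigl[f(1)+f(-1)\bigr], \qquad \sum_{e \text{ odd}} c_e \;=\; \tfrac{1}{2}\bigl[f(1)-f(-1)\bigr],
\]
summed over $e \in \{0,1,\dots,n\}$, and then subtract $c_0 = 1$ from the even sum to match the stated range $e \ge 1$.

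For the four congruence-mod-$4$ identities I would apply the analogous filter at fourth roots of unity: with $\omega = i$,
\[
\sum_{e \equiv r \pmod 4} c_e\, x^e \;=\; \frac{1}{4}\sum_{k=0}^{3} i^{-rk}\, f(i^k x) \qquad (r = 0,1,2,3).
\]
Specializing $x = 1$ in each of the four cases yields precisely the right-hand sides quoted in the lemma; in the $r = 0$ case one again subtracts the $e = 0$ contribution $c_0 = 1$. The entire argument is a single application of orthogonality of characters of $\mathbb{Z}/4\mathbb{Z}$ (and its subgroup $\mathbb{Z}/2\mathbb{Z}$) applied to the generating polynomial $f(x)$.

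There is no genuine obstacle: all content is algebraic manipulation once the factorization $f(x) = \sum_e c_e x^e$ is identified. The one place to be careful is the bookkeeping of signs and powers of $i$ when expanding $i^{-rk}$ for $r = 1, 3$, which produces the asymmetric $\pm i f(\mp i x)$ terms as written; this is a mechanical check rather than a conceptual step, and no properties of $q$, of the quadratic character, or of Gauss--Jacobi sums enter the argument.
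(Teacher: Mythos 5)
Your proposal is correct and is essentially the paper's own argument: the authors likewise expand $f(x)=\sum_{e}\sum_{j}(-1)^j\binom{m}{j}\binom{n-m}{e-j}(\sqrt{q}x)^e$, combine $f(x)$ and $f(-x)$ to isolate the even and odd parts (subtracting the $e=0$ term where needed), and note that the mod-$4$ identities follow by the same root-of-unity filtering that you state explicitly. Your version is just a slightly more systematic packaging of the same computation, and your sign bookkeeping for the $i^{-rk}$ factors checks out against the stated right-hand sides.
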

\begin{proof}
We only prove the first two equations, the other equations can be obtained with the same method.
\begin{eqnarray*}
 f(x)&= &  (1-\sqrt q x )^m\cdot(1+\sqrt qx )^{n-m}\\
 &=&\sum_{j=0}^m (-1)^j\binom{m}{j}(\sqrt qx)^j\cdot\sum_{k=0}^{n-m}\binom{n-m}{k}(\sqrt qx )^k  \\
&=&\sum_{e=0}^n\sum_{j=0}^m(-1)^j\binom{m}{j}\binom{n-m}{e-j}(\sqrt q x )^e\\
\end{eqnarray*}
Similarly, we have
$$f(-x)=-\mathop{\sum_{e=0}^n}_{{e\  \textrm{odd} }}\sum_{j=0}^m(-1)^j\binom{m}{j}\binom{n-m}{e-j}(\sqrt qx )^e+\mathop{\sum_{e=0}^n}_{{e\  \textrm{even} }}\sum_{j=0}^m(-1)^j\binom{m}{j}\binom{n-m}{e-j}(\sqrt q x)^e.$$

We can easily obtain the first two equations by combing the above two results.

\end{proof}
\begin{lemma}
 Let $\chi$ be the nontrivial quadratic character of $\mathbb{F}_{q}$. For $a_1,a_2,\cdots,a_n\in \mathbb{F}_{q}^*$, set $m=\sharp\{1\leq i \leq n\mid \chi(a_i)=1\}$,
 then we have for $e\geq 1$,
\begin{equation}
\sum_{1\leq i_1<\cdots <i_e\leq n}\chi(a_{i_1}\cdots a_{i_e})=(-1)^e\sum_{i=0}^m(-1)^i\binom{m}{i}\binom{n-m}{e-i}.
\end{equation}

\end{lemma}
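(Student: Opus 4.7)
The plan is to prove the identity by a straightforward counting argument that partitions the indices $\{1,\ldots,n\}$ according to the value of $\chi(a_i)$, and then matches coefficients.

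First I would observe that since $\chi$ is the nontrivial quadratic character, we have $\chi(a_i)\in\{+1,-1\}$ for every $i$, and by definition exactly $m$ of the indices lie in $A_+=\{i:\chi(a_i)=1\}$ while the remaining $n-m$ lie in $A_-=\{i:\chi(a_i)=-1\}$. Using multiplicativity, $\chi(a_{i_1}\cdots a_{i_e})=\prod_{t=1}^{e}\chi(a_{i_t})$, so the value of the summand depends only on how many indices of the subset $\{i_1,\ldots,i_e\}$ come from $A_-$.

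Next, I would partition the sum according to the integer $j=|\{i_1,\ldots,i_e\}\cap A_+|$, which ranges over admissible values. For any such subset the character product evaluates to $1^{j}\cdot(-1)^{e-j}=(-1)^{e-j}$, and the number of $e$-subsets of $\{1,\ldots,n\}$ meeting $A_+$ in exactly $j$ indices is $\binom{m}{j}\binom{n-m}{e-j}$. Grouping the terms yields
\[
\sum_{1\leq i_1<\cdots<i_e\leq n}\chi(a_{i_1}\cdots a_{i_e})
=\sum_{j}(-1)^{e-j}\binom{m}{j}\binom{n-m}{e-j}.
\]

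Finally, I would pull out the factor $(-1)^{e}$ (noting $(-1)^{-j}=(-1)^{j}$) and observe that the binomial coefficients $\binom{m}{j}$ and $\binom{n-m}{e-j}$ vanish outside $0\leq j\leq m$ and $0\leq e-j\leq n-m$ respectively, so the range of summation can be replaced by $0\leq i\leq m$ without changing the value. This delivers $(-1)^{e}\sum_{i=0}^{m}(-1)^{i}\binom{m}{i}\binom{n-m}{e-i}$, which is exactly the right-hand side of \eqref{} (the asserted identity). There is no real obstacle here; the only point requiring a brief word is the harmless adjustment of summation bounds via vanishing of binomials, so the proof is essentially a one-paragraph counting argument.
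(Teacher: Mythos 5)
Your proof is correct and amounts to the same argument as the paper's: both partition the $e$-subsets according to how many indices satisfy $\chi(a_i)=1$, each such class contributing $(-1)^{e-j}\binom{m}{j}\binom{n-m}{e-j}$. The paper merely packages this as extracting the coefficient of $x^e$ from $(1+x)^m(1-x)^{n-m}$, whereas you count directly; the content is identical, and your remark about the vanishing of out-of-range binomial coefficients is the right way to justify the stated summation bounds.
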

\begin{proof}
It is not difficulty to prove that $\sum_{1\leq i_1<\cdots <i_e\leq n}
\chi(a_{i_1}\cdots a_{i_e})$ is just the coefficient of $x^e$ in the polynomial $\prod_{i=1}^n\left[1+\chi(a_{i})x\right]$, that is to say, the coefficient of $x^e$ in the polynomial $(1+x)^m(1-x)^{n-m}$ since  $m=\sharp\{1\leq i \leq n\mid \chi(a_i)=1\}$.

On the other hand,
$$(1+x)^m(1-x)^{n-m}=\sum_{i=0}^m\binom{m}{i}x^i\sum_{j=0}^{n-m}(-1)^j\binom{n-m}{j}x^j=\sum_{e=0}^n(-1)^e\sum_{i=0}^m(-1)^i\binom{m}{i}\binom{n-m}{e-i}x^e.$$
 Comparing the coefficient of $x^e$ yields the desired fact.
 \end{proof}

\begin{theorem}
Let $\mathbb{F}_{q}$ be the finite field with $q=p^s$. Let $\chi$ be the nontrivial quadratic character of $\mathbb{F}_{q}$. For all $b$ in $\mathbb{F}_{q}^{*}$, set $m=\sharp\{1\leq i\leq n\mid \chi(a_i)=\chi(b)\}$.\\
$(1)$ If either $p\equiv 1($mod $4)$, or $p\equiv 3($mod $4)$ and $s$ is even, then
$$ N^{*}(n,b)=\frac{(q-1)^n}{q}-\frac{(-1)^n}{2q}\left[\left(1-\sqrt{q}\right)^{m+1}\left(1+\sqrt{q}\right)^{n-m}+
 \left(1+\sqrt{q}\right)^{m+1}\left(1-\sqrt{q}\right)^{n-m}\right].$$
$(2)$ If $p\equiv 3($mod $4)$, $s$ is odd, then
\begin{eqnarray*}
N^{*}(n,b)=\frac{(q-1)^n}{q}-\frac{(-1)^n}{2q}\left[\left(1-\sqrt{q}i\right)^{m}\left(1+\sqrt{q}i\right)^{n-m+1}+ \left(1+\sqrt{q}i\right)^{m}\left(1-\sqrt{q}i\right)^{n-m+1}\right],
\end{eqnarray*}
where $i=\sqrt{-1}$.
\end{theorem}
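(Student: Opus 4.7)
The plan is to convert the counting problem into a character sum over $\mathbb{F}_q^*$, reduce the resulting restricted Jacobi sums to the standard ones via Proposition 2.2, handle the $a_i$-dependence through Lemma 3.2, and finally collapse the remaining sum by combining Lemmas 2.3 and 3.1.

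First I would use the identity that for $y\in\mathbb{F}_q^*$ there are $1+\chi(y)$ values of $x\in\mathbb{F}_q^*$ with $x^2=y$; setting $z_i=a_ix_i^2$ and using $\chi(a_i^{-1})=\chi(a_i)$ rewrites the count as
$$N^*(n,b)=\mathop{\sum_{z_1+\cdots+z_n=b}}_{z_i\neq 0}\prod_{i=1}^n\bigl(1+\chi(a_i)\chi(z_i)\bigr).$$
Expanding the product over subsets $S\subseteq\{1,\ldots,n\}$ and performing the change of variable $z_i=bt_i$ (valid since $b\neq 0$) factors off $\chi(b)^{|S|}\prod_{i\in S}\chi(a_i)$ and leaves a Jacobi sum $J^*$ whose characters are $\chi$ on $S$ and trivial off $S$. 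Proposition 2.2 evaluates the $S=\emptyset$ contribution as $\frac{(q-1)^n-(-1)^n}{q}$ and, for $|S|=e\ge 1$, as $(-1)^{n-e}J(\underbrace{\chi,\ldots,\chi}_e)$. Combining $\chi(b)^e\prod_{i\in S}\chi(a_i)=\prod_{i\in S}\chi(ba_i)$ with the observation that $\chi(b)^2=1$ makes the count of indices with $\chi(ba_i)=1$ equal to the $m$ of the theorem, Lemma 3.2 collapses the sum over $|S|=e$ into a single binomial sum; the two sign factors combine to $(-1)^n$ and leave
$$N^*(n,b)=\frac{(q-1)^n-(-1)^n}{q}+(-1)^n\sum_{e=1}^n J(\underbrace{\chi,\ldots,\chi}_{e})\sum_{j=0}^m(-1)^j\binom{m}{j}\binom{n-m}{e-j}.$$

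To evaluate the remaining $e$-sum I would introduce a uniform parameter $\alpha=\sqrt{q}$ in case~$(1)$ and $\alpha=i\sqrt{q}$ in case~$(2)$; Lemma 2.3 then reads $J(\underbrace{\chi,\ldots,\chi}_e)=\alpha^{e-1}$ for $e$ odd and $=-\alpha^e/q$ for $e$ even. Splitting the $e$-sum by parity and applying the first two identities of Lemma 3.1 (whose proof is purely formal and transfers verbatim with $\alpha$ in place of $\sqrt{q}$) to $f(x)=(1-\alpha x)^m(1+\alpha x)^{n-m}$ reduces the $e$-sum to
$$\frac{1}{2\alpha}[f(1)-f(-1)]-\frac{1}{2q}[f(1)+f(-1)]+\frac{1}{q}.$$
The final $+\frac{1}{q}$ exactly cancels the $-\frac{(-1)^n}{q}$ produced in the previous display, and using $q/\alpha=\alpha$ in case~$(1)$ and $q/\alpha=-\alpha$ in case~$(2)$ to regroup yields the single uniform expression
$$N^*(n,b)=\frac{(q-1)^n}{q}-\frac{(-1)^n}{2q}\bigl\{(1-q/\alpha)f(1)+(1+q/\alpha)f(-1)\bigr\}.$$
Substituting the two values of $\alpha$ then reproduces the two formulas stated in the theorem.

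The hard part will be this last step: packaging the $p\equiv 3\pmod 4$, $s$ odd case together with the other two via a single parameter $\alpha$, and tracking the sign flip $q/\alpha=\pm\alpha$ that arises between the two cases. Without that flip the two formulas look inexplicably different (the extra exponent $+1$ migrates from $m+1$ in case~$(1)$ to $n-m+1$ in case~$(2)$), so the whole argument hinges on verifying that single sign correctly.
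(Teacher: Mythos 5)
Your proposal is correct and follows essentially the same route as the paper: expand the point count via the quadratic character (your $1+\chi(a_i)\chi(z_i)$ is the paper's $\sum_{\chi_i^2=1}\chi_i(y_i/a_i)$), reduce to $b=1$ Jacobi sums, apply Proposition 2.2, Lemma 3.2, Lemma 2.3 and Lemma 3.1 in the same order, and your sign bookkeeping (including the $q/\alpha=\pm\alpha$ flip that moves the extra exponent from $m+1$ to $n-m+1$) checks out. The only difference is cosmetic: the paper works out case (1) explicitly and declares case (2) "similar," whereas your uniform parameter $\alpha$ handles both at once.
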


\begin{proof}
Firstly, we consider the case $p\equiv 1($mod $4)$, and the case of $p\equiv 3($mod $4)$ and $s$ is even. Without loss of generality, we can assume $b=1$. In this case, $m=\sharp\{1\leq i \leq n\mid \chi(a_i)=1\}$. The following proof mainly based on Proposition $2.2$, Lemma $2.3$, Lemma $3.1$ and $3.2$.
\begin{eqnarray*}
   & &N^{*}(n,1)=\mathop{\sum_{y_1+y_2+\cdots + y_n =1}}_{{y_i\neq 0}}\prod_{i=1}^n \sharp \{a_i x_i^2=y_i\}     = \mathop{\sum_{y_1+y_2+\cdots + y_n =1}}_{{y_i\neq 0}}\prod_{i=1}^n \sum_{\chi_i^2=1}\chi_i \left(\frac{y_i}{a_i}\right)   \ \ \ \ \ \ \ \ \\
&=&\sum_{\chi_1^2=\chi_2^2=\cdots=\chi_n^2=1}\prod_{i=1}^n \chi_i^{-1}(a_i)\mathop{\sum_{y_1+y_2+\cdots + y_n =1}}_{{y_i\neq 0}} \chi_1(y_1)\cdots \chi_n(y_n)  \\
&=&\frac{1}{q}\left[(q-1)^n-(-1)^n\right]+\sum_{e=0}^{n-1}\sum_{\mbox{\tiny$\begin{array}{c}
\chi_1^2=\chi_2^2=\cdots=\chi_n^2=1\\
\chi_{i_1}=\cdots =\chi_{i_e}=1\\
\chi_{i_{e+1}}=\cdots =\chi_{i_n}\neq 1\end{array}$}}\prod_{i=1}^n\chi_i^{-1}(a_i)J^*(\chi_1,\chi_2,\cdots,\chi_n)   \\
&=& \frac{1}{q}\left[(q-1)^n-(-1)^n\right]+\sum_{e=0}^{n-1}\sum_{\mbox{\tiny$\begin{array}{c}
\chi_1^2=\chi_2^2=\cdots=\chi_n^2=1\\
\chi_{i_1}=\cdots =\chi_{i_e}=1\\
\chi_{i_{e+1}}=\cdots =\chi_{i_n}\neq 1\end{array}$}}
\prod_{i=1}^n\chi_i^{-1}(a_i)(-1)^e J\left(\chi_{i_{e+1}},\cdots , \chi_{i_n}\right)\\
&=& \frac{1}{q}\left[(q-1)^n-(-1)^n\right]+(-1)^n\sum_{e=1}^{n}(-1)^e\sum_{1\leq i_1<\cdots <i_e\leq n}
\chi(a_{i_1}\cdots a_{i_e}) J(\underbrace{\chi,\cdots , \chi}_{e})\ \\
&=& \frac{1}{q}\left[(q-1)^n-(-1)^n\right]+(-1)^n\sum_{e=1}^{n}\sum_{i=0}^m(-1)^i\binom{m}{i}\binom{n-m}{e-i}J(\underbrace{\chi,\cdots , \chi}_{e})\\
 &=&\frac{1}{q}\left[(q-1)^n-(-1)^n\right]+(-1)^n\mathop{\sum_{e=1}^n}_{{e\  \textrm{odd} }}\sum_{i=0}^m(-1)^i\binom{m}{i}\binom{n-m}{e-i}(\sqrt q )^{e-1}\\
 & &-(-1)^n\mathop{\sum_{e=1}^n}_{{e\  \textrm{even} }}\sum_{i=0}^m(-1)^i\binom{m}{i}\binom{n-m}{e-i}(\sqrt q )^{e-2}\\
 &=&\frac{1}{q}\left[(q-1)^n-(-1)^n\right]-\frac{(-1)^n}{2q}\{\left(1-\sqrt{q}\right)^{m+1}\left(1+\sqrt{q}\right)^{n-m}\\
 &&+ \left(1+\sqrt{q}\right)^{m+1}\left(1-\sqrt{q}\right)^{n-m}-2\}\\
 &=&\frac{(q-1)^n}{q}-\frac{(-1)^n}{2q}\left[\left(1-\sqrt{q}\right)^{m+1}\left(1+\sqrt{q}\right)^{n-m}+
 \left(1+\sqrt{q}\right)^{m+1}\left(1-\sqrt{q}\right)^{n-m}\right]
\end{eqnarray*}

Generally, if $b\neq 1$, we can transform the equation $a_1 x_1^{2}+a_2 x_2^{2}+\cdots +a_n x_n^{2}=b$ into $a_1 b^{-1} x_1^{2}+a_2 b^{-1}  x_2^{2}+\cdots +a_n b^{-1} x_n^{2}=1$. In this case, we can derive the same formula but $m=\sharp\{1\leq i \leq n\mid \chi(a_i)=\chi(b)\}$.

The case of $p\equiv 3($mod $4)$ and $s$ is odd can be similarly proved.
\end{proof}

Similarly, we can solve out the number of solutions  with  nonzero coordinates of the equation in $3.2$ when $b=0$.

 \begin{lemma}
  Let $\mathbb{F}_{q}$ be a finite field with $q=p^s$. Let $\chi$ be a nontrivial quadratic character of $\mathbb{F}_q$ and  $e$  be a positive even integer. Then
\begin{eqnarray*}
& &J_0(\underbrace{\chi,\cdots , \chi}_{e})=\chi(-1)(q-1)J(\underbrace{\chi,\cdots , \chi}_{e-1})\\
&=&\left\{
\begin{array}{ll}(q-1)q^{\frac{e}{2}-1} &\textrm{ if either $p\equiv 1($mod $4)$, or $p\equiv 3($mod $4)$ and $s$ is even}, \\  -(q-1)(-q)^{\frac{e}{2}-1} &\textrm{ if $p\equiv 3($mod $4)$ and $s$ is odd.}
\end{array}\right.
\end{eqnarray*}
 \end{lemma}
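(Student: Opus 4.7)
The plan is to recognize that this lemma is essentially a direct corollary of the two key tools already developed in the Preliminary section, namely Proposition 2.2 (which reduces $J_0$ to an ordinary Jacobi sum) together with Lemma 2.3 (which evaluates the diagonal quadratic Jacobi sums $J(\chi,\ldots,\chi)$). So the proof should be short; the only work is checking which branch of each result applies.

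First I would note that since $\chi$ is the quadratic character, $\chi^2=1$, and so with $e$ even copies of $\chi$ we have $\chi_1\chi_2\cdots\chi_e=\chi^e=1$. In particular, no $\chi_i$ is trivial (so the count of trivial characters is $0$), yet the product of all the characters is trivial. This lands us exactly in the last ``otherwise'' branch of Proposition 2.2, giving
\begin{equation*}
J_0(\underbrace{\chi,\ldots,\chi}_{e})=\chi(-1)(q-1)\,J(\underbrace{\chi,\ldots,\chi}_{e-1}),
\end{equation*}
which is the first equality stated in the lemma.

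Next I would apply Lemma 2.3 to the right-hand side, noting that $e-1$ is odd, so the ``odd'' formula applies. In case (1), where either $p\equiv 1\pmod 4$ or $p\equiv 3\pmod 4$ with $s$ even, we get $J(\chi,\ldots,\chi)=q^{(e-2)/2}=q^{e/2-1}$. In case (2), where $p\equiv 3\pmod 4$ and $s$ odd, we get $J(\chi,\ldots,\chi)=(-q)^{(e-2)/2}=(-q)^{e/2-1}$.

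Finally, I would evaluate $\chi(-1)$ in each case, using $\chi(-1)=1$ iff $q\equiv 1\pmod 4$. This gives $\chi(-1)=1$ in case (1) (since either $p\equiv 1\pmod 4$ forces $q\equiv 1\pmod 4$, or $s$ even forces $q=p^s\equiv 1\pmod 4$), yielding $(q-1)q^{e/2-1}$; and $\chi(-1)=-1$ in case (2), yielding $-(q-1)(-q)^{e/2-1}$, matching the stated formula. There is no real obstacle here; the only thing to be careful about is matching signs and the parity conditions on $s$, which is pure bookkeeping.
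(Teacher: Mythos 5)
Your proposal is correct and follows essentially the same route as the paper: the paper's one-line proof cites Theorem 5.20 of Lidl--Niederreiter (which is exactly the ``otherwise'' branch of Proposition 2.2 that you invoke, since all $e$ characters are nontrivial but their product $\chi^e$ is trivial for $e$ even) together with Lemma 2.3. Your evaluation of $\chi(-1)$ via $q\equiv 1\pmod 4$ and the sign bookkeeping are all accurate.
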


 This Lemma follows from  Theorem $5.20$ in \cite{LidlN} and Lemma $2.3$ in this paper.

 \begin{theorem}Let $\mathbb{F}_{q}$ be a finite field with $q=p^s$. Let $\chi$ be the nontrivial quadratic character of $\mathbb{F}_{q}$ and  set $m=\sharp\{1\leq i\leq n\mid \chi(a_i)=1\}$.\\
$(1)$ If either $p\equiv 1($mod $4)$, or $p\equiv 3($mod $4)$ and $s$ is even, then
\begin{eqnarray*}
N^*(n,0)=\frac{(q-1)^n}{q}+(-1)^n\frac{q-1}{2q}\left[(1-\sqrt{q})^m(1+\sqrt{q})^{n-m}+(1+\sqrt{q})^m(1-\sqrt{q})^{n-m}\right].
 \end{eqnarray*}
$(2)$ If $p\equiv 3($mod $4)$, $s$ is odd, then\\
\begin{eqnarray*}
N^*(n,0)=\frac{(q-1)^n}{q}+(-1)^n\frac{q-1}{2q}\left[\left(1-\sqrt{q}i\right)^{m}\left(1+\sqrt{q}i\right)^{n-m}+ \left(1+\sqrt{q}i\right)^{m}\left(1-\sqrt{q}i\right)^{n-m}\right].
\end{eqnarray*}
where $i=\sqrt{-1}$.
\end{theorem}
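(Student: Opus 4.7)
The plan is to follow the template of the proof of Theorem 3.3 but with the Jacobi-type sum $J_0^*$ in place of $J^*$, since the equation $a_1x_1^2+\cdots+a_nx_n^2=0$ requires the variables $y_i=a_ix_i^2$ to sum to zero rather than to one. The first step is to expand $\sharp\{x:a_ix^2=y_i\}=\sum_{\chi_i^2=1}\chi_i(y_i/a_i)$ and exchange summation, which converts $N^*(n,0)$ into $\sum_{\chi_1^2=\cdots=\chi_n^2=1}\prod_i\chi_i^{-1}(a_i)\cdot J_0^*(\chi_1,\ldots,\chi_n)$. Applying the $J_0^*$ formula from Proposition 2.2, the all-trivial case $e=n$ produces the clean contribution $\tfrac{1}{q}[(q-1)^n+(q-1)(-1)^n]$, while for $0\le e<n$ the surviving nontrivial $\chi_i$ must all coincide with the quadratic character $\chi$, and the contribution is $(-1)^e J_0(\chi,\ldots,\chi)$ with $n-e$ copies.

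The crucial new feature relative to Theorem 3.3 is a parity gate imposed by Proposition 2.2: $J_0(\chi,\ldots,\chi)$ vanishes unless its product character is trivial, i.e.\ unless $n-e$ is even. Setting $f=n-e$ and noting that $(-1)^e=(-1)^n$ on the surviving range, I would substitute the value of $J_0(\chi,\ldots,\chi)_f$ from Lemma 3.4 and use Lemma 3.2 to rewrite the character sum $\sum_{1\le i_1<\cdots<i_f\le n}\chi(a_{i_1}\cdots a_{i_f})$ as a double binomial sum indexed by $m=\sharp\{i:\chi(a_i)=1\}$.

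The final step is to evaluate $\sum_{f\ge 2,\, f\textrm{ even}}(\sqrt{q})^f\sum_j(-1)^j\binom{m}{j}\binom{n-m}{f-j}$ via the first identity of Lemma 3.1; the additive constant $-1$ produced by that identity should cancel the residual $(q-1)(-1)^n/q$ left over from the $e=n$ case, collapsing everything to the formula in part (1). Case (2) runs in parallel: Lemma 3.4 supplies $J_0=-(q-1)(-q)^{f/2-1}$, and the identity $(-q)^{f/2}=(\sqrt{q}i)^f$ (valid since $f$ is even) upgrades the generating function in Lemma 3.1 to $(1-\sqrt{q}ix)^m(1+\sqrt{q}ix)^{n-m}$; the algebraic proof of Lemma 3.1 transports verbatim with $\sqrt{q}$ replaced by $\sqrt{q}i$, producing the second formula.

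The main obstacle I anticipate is careful bookkeeping of signs and parities in the place where Theorem 3.3 had none: tracking $(-1)^e$, the $(-1)^f$ coming out of Lemma 3.2, and (for case (2)) the factor $\chi(-1)=(-1)^s$ hidden inside Lemma 3.4 all at once, and in particular checking that the $-1$ produced by the Lemma 3.1 identity exactly matches the stray $(q-1)(-1)^n/q$ term so that the final expression simplifies to the stated symmetric $(1\pm\sqrt{q})^m(1\mp\sqrt{q})^{n-m}$ form.
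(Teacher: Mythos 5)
Your proposal is correct and follows essentially the same route as the paper's own proof: expand via $J_0^*$, isolate the $e=n$ term, use the parity constraint forcing an even number of quadratic characters, substitute Lemma 3.4 and Lemma 3.2, and finish with the even-index identity of Lemma 3.1 (with $\sqrt{q}$ replaced by $\sqrt{q}i$ in case (2)), the $-1$ from that identity cancelling the stray $(q-1)(-1)^n/q$. All the sign and parity bookkeeping you flag works out exactly as you anticipate.
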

\begin{proof}If either $p\equiv 1($mod $4)$, or $p\equiv 3($mod $4)$ and $s$ is even, we can provide the following proof based on the Proposition $2.2$, Lemma $2.3$,  $3.2$ and $3.4$.
 \begin{eqnarray*}
&& N^{*}(n,0)=\mathop{\sum_{y_1+y_2+\cdots + y_n =0}}_{{y_i\neq 0}}\prod_{i=1}^n \sharp \{a_i x_i^2=y_i\}     = \mathop{\sum_{y_1+y_2+\cdots + y_n =0}}_{{y_i\neq 0}}\prod_{i=1}^n \sum_{\chi_i^2=1}\chi_i \left(\frac{y_i}{a_i}\right)   \ \ \ \ \ \ \ \ \\
&=&\sum_{\chi_1^2=\chi_2^2=\cdots=\chi_n^2=1}\prod_{i=1}^n \chi_i^{-1}(a_i)\mathop{\sum_{y_1+y_2+\cdots + y_n =0}}_{{y_i\neq 0}} \chi_1(y_1)\cdots \chi_n(y_n)  \\
&=& \frac{1}{q}\left[(q-1)^n+(q-1)(-1)^n\right]+\sum_{e=0}^{n-1}\sum_{\mbox{\tiny$\begin{array}{c}
\chi_1^2=\chi_2^2=\cdots=\chi_n^2=1\\
\chi_{i_1}=\cdots =\chi_{i_e}=1\\
\chi_{i_{e+1}}=\cdots =\chi_{i_n}\neq 1\end{array}$}}
\prod_{i=1}^n\chi_i^{-1}(a_i)(-1)^e J_0\left(\chi_{i_{e+1}},\cdots , \chi_{i_n}\right)\\
&=& \frac{1}{q}\left[(q-1)^n+(q-1)(-1)^n\right]+(-1)^n\mathop{\sum_{e=1}^{n}}_{e\ even}\sum_{1\leq i_1<\cdots <i_e\leq n}
\chi(a_{i_1}\cdots a_{i_e}) J_0(\underbrace{\chi,\cdots , \chi}_{e})\\
&=&\frac{1}{q}\left[(q-1)^n+(q-1)(-1)^n\right]+(-1)^n(q-1)\mathop{\sum_{e=1}^n}_{\textrm{$e$ even}}\sum_{i=0}^m(-1)^i\binom{m}{i}\binom{n-m}{e-i}q^{\frac{e}{2}-1}\\
\end{eqnarray*}

Then based on Lemma $3.1$, we have
\begin{eqnarray*}
N^{*}(n,0)&=&\frac{1}{q}\left[(q-1)^n+(q-1)(-1)^n\right]\\
& &+(-1)^n\frac{q-1}{2q}\left[(1-\sqrt{q})^m(1+\sqrt{q})^{n-m}+(1+\sqrt{q})^m(1-\sqrt{q})^{n-m}-2\right]\\
&=&\frac{(q-1)^n}{q}+(-1)^n\frac{q-1}{2q}\left[(1-\sqrt{q})^m(1+\sqrt{q})^{n-m}+(1+\sqrt{q})^m(1-\sqrt{q})^{n-m}\right]
\end{eqnarray*}
Similarly, we can prove the case of $p\equiv 3($mod $4)$ and  $s$ is odd.
\end{proof}

\section{Distinct coordinate sieve}
In this section, we consider the number of solutions with distinct nonzero coordinates of the second moment equations. We denote it by
$$\widetilde{N}^*(k,b)=\sharp\{(x_1,x_2,\cdots,x_k)\in (\mathbb{F}_q^*)^k\mid x_1^2+x_2^2+\cdots+x_k^2=b, x_i\  \textrm{distinct for}\ 1\leq i \leq k\}.$$

In \cite{LiW10}, Li and Wan proposed a new sieve for distinct coordinate counting problem, which greatly improves the classical inclusion-exclusion sieve for this problem.

Let $D$ be a finite set, $D^k=D\times D\times\cdots\times D(k\in\mathbb{N}^+)$ be the cartesian product of $k$ copies of $D$. Let $X$ be a subset of $D^k$.  We are interested in the number of elements in $X$ with distinct coordinates, i.e., the cardinality of the set
$$\overline{X}=\{(x_1,\cdots,x_k)\in X\mid x_i\neq x_j \textrm{ for }\forall \ i\neq j\}.$$

Let $S_k$ be the symmetric group. For a given permutation $\tau\in S_k$, we can write it as disjoint cycle product, i.e., $\tau=(i_1,\cdots,i_{a_1})(j_1,\cdots,j_{a_2})\cdots(l_1,\cdots, l_{a_s})$, where $a_i\geq 1,1\leq i\leq s$. The group $S_k$ acts on $D^k$ by permuting its coordinates, that is
        $$\tau\circ(x_1,\cdots,x_k)=(x_{\tau(1)},\cdots,x_{\tau(k)}).$$
If $X$ is invariant under the action of $S_k$, we call it symmetric. A permutation $\tau\in S_k$ is said to be of type $(c_1,\cdots,c_k)$ if $\tau$ has exactly $c_i$ cycles of length $i$.

In order to illustrate the conclusion, we define
  $$X_\tau=\{(x_1,\cdots,x_k)\mid x_{i_1}=\cdots=x_{ia_1},\cdots,x_{l_1}=\cdots=x_{l_{a_s}}\}.$$
\begin{theorem}\cite{LiW10}
If $X$ is symmetric, we have
\begin{equation}
|\overline{X}|=\sum_{\sum ic_i=k}(-1)^{k-\sum c_i}N(c_1,\cdots,c_k)|X_\tau|,
\end{equation}
where $N(c_1,\cdots,c_k)$ is  the number of permutations in $S_k$ of type $(c_1,\cdots,c_k)$, i.e
$$N(c_1,\cdots,c_k)=\frac{k!}{1^{c_1}c_1!2^{c_2}c_2!\cdots k^{c_k}c_k!}.$$
\end{theorem}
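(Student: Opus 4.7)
The plan is to prove the formula via Möbius inversion on the partition lattice $\Pi_k$ of $\{1,\ldots,k\}$. For each $\mathbf{x}=(x_1,\ldots,x_k)\in X$, I associate the equality partition $\pi(\mathbf{x})\in\Pi_k$ whose blocks are the level sets of the map $i\mapsto x_i$; then $\mathbf{x}\in\overline{X}$ precisely when $\pi(\mathbf{x})$ is the discrete partition $\hat{0}$. The key observation is that the set $X_\tau$ appearing in the statement is exactly the set of $\mathbf{x}\in X$ whose equality partition is coarser than (or equal to) the partition $\pi_\tau$ determined by the cycles of $\tau$.

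First I would note that for any partition $\pi\in\Pi_k$, if I set $Y_\pi=\{\mathbf{x}\in X:\pi(\mathbf{x})\geq\pi\}$ (so $Y_\pi=X_\tau$ when $\pi=\pi_\tau$) and $Z_\pi=\{\mathbf{x}\in X:\pi(\mathbf{x})=\pi\}$, then $|Y_\pi|=\sum_{\sigma\geq\pi}|Z_\sigma|$. By Möbius inversion on $\Pi_k$,
\[
|\overline{X}|=|Z_{\hat{0}}|=\sum_{\pi\in\Pi_k}\mu(\hat{0},\pi)\,|Y_\pi|,
\]
where the partition-lattice Möbius function factors over blocks as $\mu(\hat{0},\pi)=\prod_{B\in\pi}(-1)^{|B|-1}(|B|-1)!$. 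I would either cite this standard fact or derive it by induction on $|B|$ from the defining recursion $\sum_{\sigma\leq\hat{1}}\mu(\hat{0},\sigma)=0$ together with the product structure of intervals in $\Pi_k$.

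Next I use symmetry to collapse the sum over partitions to a sum over types. Since $X$ is $S_k$-invariant, $|Y_\pi|$ depends only on the type $(c_1,\ldots,c_k)$ of $\pi$; moreover the cycle partition $\pi_\tau$ of any $\tau\in S_k$ of type $(c_1,\ldots,c_k)$ is itself of that type, so $|Y_\pi|=|X_\tau|$ for the $\tau$ in the theorem. The number of partitions of $\{1,\ldots,k\}$ of type $(c_1,\ldots,c_k)$ is $\frac{k!}{\prod_i(i!)^{c_i}c_i!}$, and on each such $\pi$ the Möbius value is $\prod_i\bigl((-1)^{i-1}(i-1)!\bigr)^{c_i}$. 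Using $i!/(i-1)!=i$, the coefficient simplifies to
\[
\frac{k!}{\prod_i(i!)^{c_i}c_i!}\cdot\prod_i\bigl((-1)^{i-1}(i-1)!\bigr)^{c_i}=(-1)^{\sum_i(i-1)c_i}\,\frac{k!}{\prod_i i^{c_i}c_i!}=(-1)^{k-\sum_i c_i}N(c_1,\ldots,c_k),
\]
since $\sum_i(i-1)c_i=\sum_iic_i-\sum_ic_i=k-\sum_ic_i$. This yields the claimed identity.

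The main obstacle is setting up the inversion cleanly and justifying the Möbius-function formula for $\Pi_k$; once that is in hand, the remaining work is purely combinatorial bookkeeping, consisting of grouping partitions by type, invoking symmetry to replace $|Y_\pi|$ by $|X_\tau|$, and collapsing the multinomial $k!/\prod(i!)^{c_i}c_i!$ against the $(i-1)!$'s produced by $\mu$ to obtain $N(c_1,\ldots,c_k)$. A reader-friendly alternative would be a direct inclusion–exclusion on pairs $i<j$ with $x_i=x_j$; this avoids the partition lattice but produces the same identity only after reorganising the resulting alternating sums by cycle type, which is essentially the same calculation in disguise.
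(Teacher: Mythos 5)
Your proof is correct: the paper itself states this result as a citation to [LiW10] without reproducing a proof, and your argument via M\"obius inversion on the partition lattice $\Pi_k$ (using $\mu(\hat{0},\pi)=\prod_{B\in\pi}(-1)^{|B|-1}(|B|-1)!$ and then collapsing partitions of a fixed type against permutations of that cycle type) is essentially the standard argument given in that reference. The bookkeeping identifying $Y_{\pi_\tau}$ with $X_\tau$ and reducing the coefficient to $(-1)^{k-\sum c_i}N(c_1,\dots,c_k)$ checks out.
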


Usually, if $|X_\tau|$ can be written by the form $|X_\tau|=t_1^{c_1}t_2^{c_2}\cdots t_k^{c_k}$ for some nonzero real numbers $t_1, t_2, \cdots, t_k$, we can induce a generating function to compute
$|\overline{X}|$.
\begin{defn}
Define the generating function
$$C_k(t_1,\cdots,t_k)=\sum_{\sum ic_i=k}N(c_1,\cdots,c_k)t_1^{c_1}t_2^{c_2}\cdots t_k^{c_k}.$$
\end{defn}
\begin{lemma}
$(1)$ If $t_1=\cdots = t_k=a$, then
     $$C_k(a,\cdots,a)=(-1)^k(-a)_k,$$
where $(x)_k=x(x-1)\cdots (x-k+1)$ for a real number $x$ and a positive integer $k$.\\
$(2)$ If $t_i=a$ for $p\nmid i$, $t_i=b$ for $p\mid i$, then
       $$C_k(\overbrace{a,\cdots,a}^{p-1},b,\overbrace{a,\cdots,a}^{p-1},b,\cdots)=k!(-1)^k\cdot \sum_{i\geq 0}\binom{-a}{k-pi}\binom{\frac{a-b}{p}}{i}$$
\end{lemma}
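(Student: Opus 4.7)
The plan is to derive both identities from the exponential formula for weighted permutations. Interpreting $N(c_1,\ldots,c_k)t_1^{c_1}\cdots t_k^{c_k}$ as the sum over $\sigma\in S_k$ of $\prod_{C\in\mathrm{Cyc}(\sigma)} t_{|C|}$, where the product runs over the cycles of $\sigma$, and noting that an $i$-cycle on $i$ labelled elements has weighted exponential generating function $t_i x^i/i$ (there being $(i-1)!$ such cycles), the exponential formula yields the master identity
$$\sum_{k\ge 0}\frac{C_k(t_1,\ldots,t_k)}{k!}\,x^k \;=\; \exp\!\left(\sum_{i\ge 1}t_i\,\frac{x^i}{i}\right).$$
Both parts then reduce to specializing this identity and extracting the coefficient of $x^k$.

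For part (1), I would set every $t_i=a$, obtaining $\sum_{i\ge 1}a\,x^i/i=-a\log(1-x)$, so the EGF becomes $(1-x)^{-a}$. The generalised binomial theorem gives $[x^k](1-x)^{-a}=(-1)^k\binom{-a}{k}=(-1)^k(-a)_k/k!$, and multiplying by $k!$ yields $C_k(a,\ldots,a)=(-1)^k(-a)_k$.

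For part (2), I would write $t_i=a+(b-a)[p\mid i]$ (Iverson bracket) and split the logarithm along the progression $p\mathbb{Z}$:
$$\sum_{i\ge 1}t_i\,\frac{x^i}{i}=-a\log(1-x)+\frac{b-a}{p}\sum_{j\ge 1}\frac{x^{pj}}{j}=-a\log(1-x)-\frac{b-a}{p}\log(1-x^p).$$
Exponentiating factors the EGF as $(1-x)^{-a}(1-x^p)^{(a-b)/p}$. Expanding each factor by the generalised binomial theorem and collecting the coefficient of $x^k$ with $j=k-pi$ produces
$$\frac{C_k}{k!}=\sum_{i\ge 0}\binom{-a}{k-pi}\binom{(a-b)/p}{i}(-1)^{k-pi+i}.$$
Because the paper works throughout with odd characteristic $p$, the integer $1-p$ is even, so $(-1)^{i(1-p)}=1$ and the global sign collapses uniformly to $(-1)^k$; multiplying by $k!$ then gives the stated identity.

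The only genuinely delicate point is the final sign collapse in part (2): the stated formula actually fails for even $p$ (a direct check with $k=2$, $p=2$ shows a discrepancy), so the oddness of $p$ is being used in an essential way to fold $(-1)^{k-pi+i}$ into $(-1)^k$ for every $i$. Apart from this observation, the argument is routine EGF bookkeeping that requires nothing beyond the exponential formula and the generalised binomial series.
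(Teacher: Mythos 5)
Your proof follows essentially the same route as the paper's: the exponential formula $\sum_{k\ge 0}C_k(t_1,\dots,t_k)\frac{u^k}{k!}=\exp\bigl(\sum_{i\ge 1}t_i\frac{u^i}{i}\bigr)$, specialization to $(1-u)^{-a}$ in part (1) and to $(1-u)^{-a}(1-u^p)^{\frac{a-b}{p}}$ in part (2), followed by coefficient extraction via the generalized binomial series. Your explicit observation that the final sign collapse $(-1)^{k-pi+i}=(-1)^k$ in part (2) requires $p$ to be odd is correct and worth recording: the paper's own proof performs this step silently, and the identity as stated would indeed fail for $p=2$, so the standing assumption of odd characteristic is being used here in an essential way.
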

\begin{proof}
Firstly, we have the following exponential generating function
    $$\sum_{k\geq 0}C_k(t_1,\cdots,t_k)\frac{u^k}{k!}=e^{ut_1+u^2\cdot\frac{t_2}{2}+u^3\cdot\frac{t_3}{3}+\cdots}.$$
We denote by $\left[\frac{u^k}{k!}\right]f(x)$ the coefficient of $x^i$ in the formal power series expansion of $f(x)$. \\
$(1)$ If $t_1=\cdots = t_k=a$,
\begin{eqnarray*}
C_k(a,\cdots,a)&=& \left[\frac{u^k}{k!}\right]e^{a\left(u+\frac{u^2}{2}+\frac{u^3}{3}+\cdots\right)}= \left[\frac{u^k}{k!}\right]e^{-a\log(1-u)}\\
&=&\left[\frac{u^k}{k!}\right](1-u)^{-a}=\left[\frac{u^k}{k!}\right]\sum_{i\geq 0}(-1)^i\binom{-a}{i}u^i\\
&=&(-1)^k k!\binom{-a}{k}=(-1)^k(-a)_k.
\end{eqnarray*}
$(2)$  If $t_i=a$ for $p\nmid i$, $t_i=b$ for $p\mid i$, then
\begin{eqnarray*}
 &&C_k(\overbrace{a,\cdots,a}^{p-1},b,\overbrace{a,\cdots,a}^{p-1},b,\cdots) =\left[\frac{u^k}{k!}\right]e^{ua+u^2\cdot\frac{a}{2}+\cdots+u^{p-1}\frac{a}{p-1}+u^p\cdot\frac{b}{p}+u^{p+1}\frac{a}{p+1}\cdots}\\
&=& \left[\frac{u^k}{k!}\right]e^{-a\log(1-u)-\frac{b-a}{p}\log(1-u^p)}=\left[\frac{u^k}{k!}\right](1-u)^{-a}\cdot(1-u^p)^\frac{a-b}{p}\\
&=&\left[\frac{u^k}{k!}\right]\sum_{j\geq 0}(-1)^j\binom{-a}{j}u^j\sum_{i\geq 0}(-1)^i\binom{\frac{a-b}{p}}{i}u^{pi}=k!(-1)^k\cdot \sum_{i\geq 0}\binom{-a}{k-pi}\binom{\frac{a-b}{p}}{i}.
\end{eqnarray*}
\end{proof}

\begin{defn}
For $b\in \mathbb{F}_q^*$, we define
$$A_{k,b}(u,v,w)=C_k(t_1,\cdots,t_k),$$
where
$$t_i=\left\{
\begin{array}{ll}  u & p\nmid i, \chi(i)=\chi(b),\\ v & p\nmid i, \chi(i)=-\chi(b),\\ w & p\mid i.
\end{array}\right.$$
Note that $A_{k,b}(u,v,w)$ depends only on the value of $\chi(b)$, not on $b$.
In the case $u=v\neq w$, we simply write $A_{k,b}(u,v,w)$ as $A_{k,b}(u,w)$ and have the greatly simplifed formula
$$A_{k,b}(u,w)=k!(-1)^k\cdot \sum_{i\geq 0}\binom{-u}{k-pi}\binom{\frac{u-w}{p}}{i}.$$
\end{defn}
\begin{theorem}
Let $\mathbb{F}_{q}$ be a finite field with $q=p^s$.  For all $b\in \mathbb{F}_q^*$,\\
$(1)$ If either  $p\equiv 1($mod $4)$, or $p\equiv 3($mod $4)$ and $s$ is even, we have
 \begin{eqnarray*}
 \widetilde{N}^*(k,b)&=&\frac{(q-1)_k}{q}-\frac{(-1)^k}{2q}\{(1-\sqrt{q})A_{k,b}(1-\sqrt{q},1+\sqrt{q},1-q)\\
    & &+(1+\sqrt{q})A_{k,b}(1+\sqrt{q},1-\sqrt{q},1-q)\}.
 \end{eqnarray*}
 $(2)$ If  $p\equiv 3($mod $4)$, $s$ is odd, we have
 \begin{eqnarray*}
 \widetilde{N}^*(k,b)&=&\frac{(q-1)_k}{q}-\frac{(-1)^k}{2q}\{(1+\sqrt{q}i)A_{k,b}(1-\sqrt{q}i,1+\sqrt{q}i,1-q)\\
  & &  +(1-\sqrt{q}i)A_{k,b}(1+\sqrt{q}i,1-\sqrt{q}i,1-q)\},
 \end{eqnarray*}
 where $i=\sqrt{-1}$.
\end{theorem}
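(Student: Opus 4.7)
The plan is to apply the distinct coordinate sieve (Theorem~4.1) to the symmetric set $X=\{(x_1,\ldots,x_k)\in(\mathbb{F}_q^*)^k : x_1^2+\cdots+x_k^2=b\}$, so that $\widetilde N^*(k,b)=|\overline X|$. For a permutation $\tau\in S_k$ of cycle type $(c_1,\ldots,c_k)$, identifying the coordinates inside each cycle turns the equation into the diagonal quadratic equation $\sum_j\ell_j y_j^2=b$ over $(\mathbb{F}_q^*)^{\sum c_i}$, with $\ell_j$ the cycle lengths. Cycles of length $\ell$ with $p\mid\ell$ have vanishing coefficient in $\mathbb{F}_q$, so the corresponding $y_j$ is free, contributing a factor $(q-1)^r$ with $r=\sum_{p\mid i}c_i$; the remaining $n=\sum_{p\nmid i}c_i$ variables satisfy a genuine diagonal quadratic equation to which Theorem~3.3 applies with $m=\sum_{p\nmid i,\,\chi(i)=\chi(b)}c_i$. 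Hence $|X_\tau|=(q-1)^r N^*(n,b)$.

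For case~(1), split the bracket in Theorem~3.3(1) as $(1-\sqrt q)\cdot\alpha^m\beta^{n-m}+(1+\sqrt q)\cdot\beta^m\alpha^{n-m}$ with $\alpha=1-\sqrt q$, $\beta=1+\sqrt q$, so that $|X_\tau|$ becomes a sum of three terms, each of the form $(\text{const})\cdot\prod_i t_i^{c_i}$. Substituting into Theorem~4.1 and using the key sign identity $(-1)^{k-\sum c_i}(-1)^n=(-1)^k(-1)^r$ folds the external sign and the $(-1)^n$ from Theorem~3.3 into an overall $(-1)^k$, with a residual $(-1)^r$ that converts $(q-1)^r$ into $(1-q)^r$. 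The first term then collapses to $\frac{(q-1)_k}{q}$ via Lemma~4.3(1). The other two match $A_{k,b}$ directly: the triple $(1-\sqrt q,1+\sqrt q,1-q)$ corresponds to $(u,v,w)$ under the three cases $\chi(i)=\chi(b)$, $\chi(i)=-\chi(b)$, $p\mid i$, while the $\alpha\leftrightarrow\beta$ swap in the second sub-piece produces the companion $A_{k,b}(1+\sqrt q,1-\sqrt q,1-q)$; the outer prefactors $1\pm\sqrt q$ are precisely the linear factors pulled out of the exponent $m+1$ in Theorem~3.3(1). Case~(2) is identical after setting $\alpha=1-\sqrt q i$, $\beta=1+\sqrt q i$, except that now the exponent split $m+(n-m+1)$ pulls $\beta$ (resp.\ $\alpha$) out front, giving the asymmetric prefactors $(1+\sqrt q i)$ and $(1-\sqrt q i)$ in the stated formula; note that although $\alpha\beta=1+q$ here (not $q-1$), the third argument of $A_{k,b}$ still comes from $(-1)^r(q-1)^r=(1-q)^r$, so $w=1-q$ uniformly across both cases.

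The chief technical difficulty is the sign bookkeeping: verifying the identity $(-1)^{k-\sum c_i}(-1)^n=(-1)^k(-1)^r$ (using $\sum c_i=r+n$) and tracking how the exponent splits $m+1$ (case~1) and $n-m+1$ (case~2) produce the correct linear prefactors outside $A_{k,b}$. Once these signs are in order, the identification of each of the three pieces of $N^*(n,b)$ as a $C_k$-sum reading off the values $t_i\in\{u,v,w\}$ is forced by Definition of $A_{k,b}$, and the formula is a direct translation of Theorem~3.3 through the sieve.
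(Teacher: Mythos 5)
Your proposal is correct and follows the paper's own argument essentially verbatim: the paper likewise applies the distinct-coordinate sieve to $X^*=\{(x_1,\ldots,x_k)\in(\mathbb{F}_q^*)^k \mid \sum x_i^2=b\}$, writes $|X^*_\tau|=(q-1)^{\sum_{p\mid i}c_i}\,N^*(n,b)$ using the diagonal-equation count of Theorem 3.3 with $m_0=\sum_{p\nmid i,\,\chi(i)=\chi(b)}c_i$, and reassembles the three resulting pieces through the generating function $A_{k,b}$ exactly as you describe, with the same sign bookkeeping. The only detail the paper spells out that you elide is the degenerate case $n=0$ (all cycle lengths divisible by $p$), where Theorem 3.3 does not literally apply but $|X^*_\tau|=0$ for $b\neq 0$ and the closed formula conveniently evaluates to $0$ as well, so your computation goes through unchanged.
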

\begin{proof}
   Let $X^*=\{(x_1,\cdots,x_k)\in (\mathbb{F}_q^*)^k\mid x_1^2+x_2^2+\cdots+x_k^2=b\}$. Obviously, $X^*$ is symmetric, so we can use the relation
   $$\widetilde{N}^*(k,b)=\sum_{\sum ic_i=k}(-1)^{k-\sum c_i}N(c_1,\cdots,c_k)|X_\tau^*|,$$
where
   $\tau\in S_k$ is  of type $(c_1,\cdots,c_k)$, $X_\tau^*=\{(x_{11},\cdots,x_{kc_k})\in \left(\mathbb{F}_q^*\right)^{\sum c_i}\mid x_{11}^2+\cdots+x_{1c_1}^2+2x_{21}^2+\cdots+2x_{2c_2}^2+\cdots+kx_{k1}^2+\cdots+kx_{kc_k}^2=b\}.$ Then we need to compute $|X_\tau^*|$ first.

 Set $m=\sum_{p\mid i}c_i$, $n=\sum_{p\nmid i}c_i$, $m_0=\sum_{1\leq i \leq k, p\nmid i,\chi(i)=\chi(b)}c_i$. Obviously, $n+m=\sum_i c_i$, $n-m_0=\sum_{1\leq i \leq k, p\nmid i, \chi(i)=-\chi(b)}c_i$, and the number of the variables in the equation $ x_{11}^2+\cdots+x_{1c_1}^2+2x_{21}^2+\cdots+2x_{2c_2}^2+\cdots+kx_{k1}^2+\cdots+kx_{kc_k}^2=b$ is $n$.  If $n\neq 0$, we can use the conclusion of theorem $3.3$ to compute $|X_{\tau}^*|$.

If either $p\equiv 1($mod $4)$, or $p\equiv 3($mod $4)$ and $s$ is even,
\begin{eqnarray*}
   && |X_{\tau}^*|=(q-1)^{m} \sharp \{(\cdots,x_{it_i},\cdots)\in (\mathbb{F}_q^*)^n\mid \sum_{\mbox{\tiny$\begin{array}{c}
1\leq i \leq k, 1\leq t_i\leq c_i\\
p\nmid i\end{array}$}}ix_{it_i}^2=b\}\\
&=& (q-1)^{m}\{\frac{(q-1)^n}{q}-\frac{(-1)^n}{2q}\left[\left(1-\sqrt{q}\right)^{m_0+1}\left(1+\sqrt{q}\right)^{n-m_0}+
 \left(1+\sqrt{q}\right)^{m_0+1}\left(1-\sqrt{q}\right)^{n-m_0}\right]\}\
\end{eqnarray*}

Otherwise, if $n=0$, i.e., $\tau$ consists only of cycles with length divisible by $p$, then the equation $ x_{11}^2+\cdots+x_{1c_1}^2+2x_{21}^2+\cdots+2x_{2c_2}^2+\cdots+kx_{k1}^2+\cdots+kx_{kc_k}^2=b$ changes into $0=b$, which never holds for $b\neq 0$. So in this case $|X_{\tau}^*|=0$. Fortunately, this result is formally in accordance with the case of $n\neq 0$.

    So we have
 \begin{eqnarray*}
    \widetilde{N}^*(k,b)&=&\sum_{\sum ic_i=k}(-1)^{k-\sum c_i}N(c_1,\cdots,c_k)|X_\tau^*|\\
    &=&(-1)^k\sum_{\sum ic_i=k}N(c_1,\cdots,c_k)(-1)^{\sum c_i}|X_\tau^*|\\
    &=&\frac{(q-1)_k}{q}-\frac{(-1)^k}{2q}\{(1-\sqrt{q})A_{k,b}(1-\sqrt{q},1+\sqrt{q},1-q)\\
    & &+(1+\sqrt{q})A_{k,b}(1+\sqrt{q},1-\sqrt{q},1-q)\}
 \end{eqnarray*}
 
 The case of $p\equiv 3($mod $4)$ and $s$ is odd can be proved similarly.
 \end{proof}

\begin{theorem} Let $\mathbb{F}_{q}$ be a finite field with $q=p^s$.\\
$(1)$ If either $p\equiv 1($mod $4)$,  or $p\equiv 3($mod $4)$ and $s$ is even,  we have
  $$\widetilde{N}^*(k,0)=\frac{(q-1)_k}{q}+(-1)^k\frac{q-1}{2q}\{A_{k,1}(1-\sqrt{q},1+\sqrt{q},1-q)+A_{k,1}(1+\sqrt{q},1-\sqrt{q},1-q)\}.$$
   $(2)$ If  $p\equiv 3($mod $4)$, $s$ is odd, we have
 \begin{eqnarray*}
 \widetilde{N}^*(k,0)&=&\frac{(q-1)_k}{q}+(-1)^k\frac{q-1}{2q}\{A_{k,1}(1-\sqrt{q}i,1+\sqrt{q}i,1-q)\\
    & &+A_{k,1}(1+\sqrt{q}i,1-\sqrt{q}i,1-q)\}.
 \end{eqnarray*}
 \end{theorem}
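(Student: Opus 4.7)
My plan is to mirror the argument for Theorem 4.4, with the chief substitution of Theorem 3.5 in place of Theorem 3.3 to handle the right-hand side $b=0$. Let $X^* = \{(x_1,\ldots,x_k) \in (\mathbb{F}_q^*)^k : x_1^2 + \cdots + x_k^2 = 0\}$. This set is symmetric under the coordinate action of $S_k$, so Theorem 4.1 gives
\[
\widetilde{N}^*(k,0) = \sum_{\sum ic_i = k} (-1)^{k-\sum c_i} N(c_1,\ldots,c_k)\,|X_\tau^*|,
\]
reducing the problem to computing $|X_\tau^*|$ for a representative $\tau$ of each cycle type and then recognizing the resulting sum as a combination of two values of the generating function $A_{k,1}$.

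For $\tau$ of type $(c_1,\ldots,c_k)$, a cycle of length $i$ contributes a single variable $x$ with coefficient $i$ to the sum of squares. Split the cycles into those with $p\mid i$ (which vanish, contributing $(q-1)^m$ free choices, where $m=\sum_{p\mid i}c_i$) and those with $p\nmid i$ (which give a quadratic diagonal equation in $n=\sum_{p\nmid i}c_i$ variables with coefficients $i$). Set $m_0=\sum_{p\nmid i,\,\chi(i)=1}c_i$. In the case $p\equiv 1\pmod 4$ or $p\equiv 3\pmod 4$ with $s$ even, Theorem 3.5(1) gives
\[
|X_\tau^*| = (q-1)^m\Bigl\{\tfrac{(q-1)^n}{q} + \tfrac{(-1)^n(q-1)}{2q}\bigl[(1-\sqrt{q})^{m_0}(1+\sqrt{q})^{n-m_0} + (1+\sqrt{q})^{m_0}(1-\sqrt{q})^{n-m_0}\bigr]\Bigr\}.
\]
In contrast with Theorem 4.4, permutations with $n=0$ (only cycles of length divisible by $p$) do contribute nontrivially here, since the equation $0=0$ is vacuously satisfied; one verifies that the formula above still holds in this degenerate case, so the above expression for $|X_\tau^*|$ is uniform in all cycle types.

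Substituting into the sieve and exchanging the order of summation, the leading term $(q-1)^{m+n}/q$ yields $\frac{(-1)^k}{q}\,C_k(1-q,\ldots,1-q) = \frac{(q-1)_k}{q}$ by Lemma 4.3(1) with $a=q-1$. For each of the two bracketed terms, after absorbing the $(-1)^{k-\sum c_i}$ and the $(-1)^n$ signs into the bases (using $(-1)^{k-m-n}(-1)^n(q-1)^m = (-1)^k(1-q)^m$), the product over cycles takes the form $\prod_i T_i^{c_i}$ with $T_i = 1-q$ on the $p\mid i$ slots and $T_i\in\{1-\sqrt{q},\,1+\sqrt{q}\}$ on the $p\nmid i$ slots according to the sign of $\chi(i)$; summing these via $C_k$ produces exactly $A_{k,1}(1-\sqrt{q},1+\sqrt{q},1-q)$ and $A_{k,1}(1+\sqrt{q},1-\sqrt{q},1-q)$. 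Assembling yields the claimed formula. The main bookkeeping obstacle is tracking the parity in $(-1)^{k-m-n}(-1)^n = (-1)^{k-m}$ and factoring the $(q-1)^m$ uniformly, so that the $p\mid i$ slot in $A_{k,1}$ cleanly receives the value $1-q$. The case $p\equiv 3\pmod 4$ with $s$ odd is treated identically, invoking Theorem 3.5(2) and replacing $\sqrt{q}$ by $\sqrt{q}\,i$ throughout.
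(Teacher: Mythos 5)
Your proposal is correct and follows essentially the same route as the paper's own proof: the same distinct-coordinate sieve over cycle types, the same substitution of Theorem 3.5 for Theorem 3.3, the same observation that the $n=0$ (all cycle lengths divisible by $p$) case contributes $(q-1)^{\sum c_i}$ and is formally absorbed by the general formula, and the same repackaging of the resulting sums as values of $C_k$, i.e.\ of $A_{k,1}$. The only quibble is notational: in invoking Lemma 4.3(1) you should take $a=1-q$ (so that $(-a)_k=(q-1)_k$), not $a=q-1$.
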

 \begin{proof}
Let $X_0^*=\{(x_1,\cdots,x_k)\in (\mathbb{F}_{q}^*)^k\mid x_1^2+\cdots+x_k^2=0\}$. Since $X_0^*$ is symmetric, we can apply
$$N^*(k,0)=\sum_{\sum ic_i=k}(-1)^{k-\sum c_i}N(c_1,\cdots,c_k)|X_{0\tau}^*|,$$
where $\tau$ is of type $(c_1,\cdots, c_k)$, $X_{0\tau}^*=\{(x_{11},\cdots,x_{kc_k})\in \left(\mathbb{F}_q^*\right)^{\sum c_i}\mid x_{11}^2+\cdots+x_{1c_1}^2+2x_{21}^2+\cdots+2x_{2c_2}^2+\cdots+kx_{k1}^2+\cdots+kx_{kc_k}^2=0\}.$ Next we need to compute $|X_{0\tau}^*|$.
 Set $m=\sum_{p\mid i}c_i$, $n=\sum_{p\nmid i}c_i$, $m_0=\sum_{1\leq i \leq k, p\nmid i, \chi(i)=1}c_i$. Obviously, $n+m=\sum_i c_i$, $n-m_0=\sum_{1\leq i \leq k, p\nmid i, \chi(i)=-1}c_i$, and the number of the variables in the equation
 $x_{11}^2+\cdots+x_{1c_1}^2+2x_{21}^2+\cdots+2x_{2c_2}^2+\cdots+kx_{k1}^2+\cdots+kx_{kc_k}^2=0$ is $n$. If $n\neq 0$, we can use the conclusion of theorem $3.5$ to compute $|X_{0\tau}^*|$.

 If either $p\equiv 1($mod $4)$,  or $p\equiv 3($mod $4)$ and $s$ is even,
\begin{eqnarray*}
    &&|X_{0\tau}^*|=(q-1)^{m} \sharp \{(\cdots,x_{it_i},\cdots)\in (\mathbb{F}_q^*)^n\mid \sum_{\mbox{\tiny$\begin{array}{c}
1\leq i \leq k, 1\leq t_i\leq c_i\\
p\nmid i\end{array}$}}ix_{it_i}^2=0\}\\
    &=&(q-1)^m\{\frac{(q-1)^n}{q}+(-1)^n\frac{q-1}{2q}\left[(1-\sqrt{q})^{m_0}(1+\sqrt{q})^{n-m_0}+(1+\sqrt{q})^{m_0}(1-\sqrt{q})^{n-m_0}\right]\}
\end{eqnarray*}
Otherwise, if $n=0$, i.e., $\tau$ consists only of cycles with length divisible by $p$, the equation $ x_{11}^2+\cdots+x_{1c_1}^2+2x_{21}^2+\cdots+2x_{2c_2}^2+\cdots+kx_{k1}^2+\cdots+kx_{kc_k}^2=0$ changes into $0=0$, which holds for any $x_{it_i}\in \mathbb{F}_{q}^*(1\leq i\leq n)$. So in this case $|X_{\tau}^*|=(q-1)^{\sum_i c_i}$. Fortunately, this result also formally coincides with the case of $n\neq 0$.

   So we have
 \begin{eqnarray*}
    & &\widetilde{N}^*(k,0)=(-1)^k\sum_{\sum ic_i=k}N(c_1,\cdots,c_k)(-1)^{\sum c_i}|X_{0\tau}^*|\\
       &=&\frac{(q-1)_k}{q}+(-1)^k\frac{q-1}{2q}\{A_{k,1}(1-\sqrt{q},1+\sqrt{q},1-q)+A_{k,1}(1+\sqrt{q},1-\sqrt{q},1-q)\}
 \end{eqnarray*}
Note that we have $A_{k,1}(\cdot, \cdot, \cdot)$ in the above equation because of the definition of $m_0$ and $A_{k,b}(\cdot, \cdot, \cdot)$ for $b\neq 0$.  The other case can be proved similarly.
\end{proof}

\section{Sieve again for solving the subset sum problem}
Now let us come back to the subset sum problem on quadratic residues. Since $H=\{x^2\mid x\in \mathbb{F}_{q}^*\}$, it is obviously a subgroup of $\mathbb{F}_{q}^*$ with $\frac{q-1}{2}$ elements, and every element $y\in H$ can be expressed as $y=x^2$ for exactly two $x\in \mathbb{F}_{q}^*$. What's more,  $x_i^2=x_j^2$ iff $x_i=\pm x_j$. Therefore, if we denote by
$$Y=\{(y_1, y_2, \cdots, y_k)\in H^k\mid y_1+y_2+\cdots +y_k=b\},$$
 $$\widetilde{Y}=\{(y_1, y_2, \cdots, y_k)\in H^k\mid y_1+y_2+\cdots +y_k=b, y_i\neq y_j\textrm{ for }\forall \ \ i\neq j\}.$$
  Then obviously, $N_H(k,b)=\frac{1}{k!}|\widetilde{Y}|$, and  we can also use $(4.1)$ to calculate $|\widetilde{Y}|$ since $Y$ is symmetric. As a consequence, we have
  \begin{equation}
  N_H(k,b)=\frac{1}{k!}\sum_{\sum ic_i=k}(-1)^{k-\sum c_i}N(c_1,\cdots,c_k)|\widetilde{Y}_\tau|,
  \end{equation}
  where $\tau$ is a permutation of type $(c_1,c_2,\cdots,c_k)$ in $S_k$, and
 $$\widetilde{Y}_\tau=\{(y_{11},\cdots,y_{1c_1},\cdots,y_{kc_k})\in H^{\sum c_i}\mid y_{11}+\cdots+y_{1c_1}+\cdots+ky_{kc_k}=b\}.$$
On the other hand, we have defined in the previous section that
$$X_\tau^*=\{(x_{11},\cdots,x_{1c_1},\cdots,x_{kc_k})\in(\mathbb{F}_{q}^*)^{{\sum c_i}}\mid x_{11}^2+\cdots+x_{1c_1}^2+\cdots+kx_{kc_k}^2=b\},$$
and have calculated the cardinality of $X_\tau^*$. It is also not hard to see
\begin{equation}
|\widetilde{Y}_\tau|=\frac{|X_\tau^*|}{2^{\sum c_i}}.
\end{equation}
 Replacing  $|\widetilde{Y}_\tau|$ in $(5.1)$ by $\frac{|X_\tau^*|}{2^{\sum c_i}}$ yields the following conclusion.

\begin{theorem}Let $\mathbb{F}_{q}$ be a finite field with $q=p^s$. For any $b\in \mathbb{F}_{q}^*$, \\
$(1)$ If either $p\equiv 1($mod $4)$, or $p\equiv 3($mod $4)$ and  $s$ is even, we have
 \begin{eqnarray*}
 N_H(k,b)&=&\frac{1}{q}\binom{\frac{q-1}{2}}{k}-\frac{(-1)^k}{2qk!}\{(1-\sqrt{q})A_{k,b}(\frac{1-\sqrt{q}}{2},\frac{1+\sqrt{q}}{2},\frac{1-q}{2})\\
    & &+(1+\sqrt{q})A_{k,b}(\frac{1+\sqrt{q}}{2},\frac{1-\sqrt{q}}{2},\frac{1-q}{2})\}.
 \end{eqnarray*}
 $(2)$ If  $p\equiv 3($mod $4)$, $s$ is odd, we have
 \begin{eqnarray*}
N_H(k,b)&=&\frac{1}{q}\binom{\frac{q-1}{2}}{k}-\frac{(-1)^k}{2qk!}\{(1+\sqrt{q}i)A_{k,b}(\frac{1-\sqrt{q}i}{2},\frac{1+\sqrt{q}i}{2},\frac{1-q}{2})\\
    & &+(1-\sqrt{q}i)A_{k,b}(\frac{1+\sqrt{q}i}{2},\frac{1-\sqrt{q}i}{2},\frac{1-q}{2})\},
 \end{eqnarray*}
where $i=\sqrt{-1}$.
\end{theorem}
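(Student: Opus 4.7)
The plan is to reduce to Theorem 4.5 via a 2-to-1 square-root map and then rescale each per-cycle variable by $1/2$. First I would set up $\widetilde{Y}$ as in (5.1), use $N_H(k,b) = |\widetilde{Y}|/k!$, and apply the distinct coordinate sieve (Theorem 4.1) to the $S_k$-invariant set $Y$ to obtain
$$N_H(k,b) = \frac{1}{k!} \sum_{\sum ic_i = k} (-1)^{k - \sum c_i} N(c_1, \ldots, c_k)\, |\widetilde{Y}_\tau|.$$
I would then verify (5.2): the squaring map $(x_{11}, \ldots, x_{kc_k}) \mapsto (x_{11}^2, \ldots, x_{kc_k}^2)$ is a surjection from $X_\tau^*$ onto $\widetilde{Y}_\tau$ (it preserves the weighted equation $\sum_{i,t} i\, x_{it}^2 = b$ and lands in $H^{\sum c_i}$), and it is exactly $2^{\sum c_i}$-to-one because each quadratic residue has two square roots in $\mathbb{F}_q^*$. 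Hence $|\widetilde{Y}_\tau| = |X_\tau^*|/2^{\sum c_i}$.

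Next, I would substitute this identity into the sieve and compare with the sum already expanded in the proof of Theorem 4.5. The only formal change is that each per-cycle factor of $|X_\tau^*|$---namely $1-q$ when $p \mid i$, $1-\sqrt{q}$ when $p \nmid i$ and $\chi(i)=\chi(b)$, or $1+\sqrt{q}$ when $p \nmid i$ and $\chi(i)=-\chi(b)$---is divided by $2$, since $2^{\sum c_i} = \prod_i 2^{c_i}$ distributes one factor of $2$ per cycle. By the definition of $A_{k,b}$, this simply replaces its three arguments $(u,v,w)$ by $(u/2, v/2, w/2)$. The leading prefactors $(1 \pm \sqrt{q})$ standing in front of each $A_{k,b}$ are \emph{not} halved, since they arise from the ``$m_0+1$'' exponent in Theorem 3.3 rather than from a per-cycle contribution. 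The main term $(q-1)_k/q$ of Theorem 4.5 becomes $((q-1)/2)_k/q$ via Lemma 4.3(1) with $a = (q-1)/2$; together with the overall $1/k!$ this yields $\frac{1}{q}\binom{(q-1)/2}{k}$. Collecting these pieces gives the formula in part (1).

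The case $p \equiv 3 \pmod 4$, $s$ odd in part (2) is handled identically, with $\sqrt{q}$ replaced by $\sqrt{q}\,i$ exactly as in Theorems 3.3, 3.5, and 4.5. The main care required is sign bookkeeping: one must write $(-1)^{k-\sum c_i}/2^{\sum c_i} = (-1)^k(-1/2)^{\sum c_i}$ so that the inner sum becomes a $C_k$-type generating function with halved arguments, and then track which factors of $|X_\tau^*|$ are per-cycle (and so get halved) versus which are leading prefactors (unchanged). No new character sum or Jacobi sum identity beyond those of Sections 2--4 is required.
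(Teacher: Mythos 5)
Your proposal is correct and follows essentially the same route as the paper: apply the Li--Wan sieve to the symmetric set $Y$, use the $2^{\sum c_i}$-to-one squaring map to get $|\widetilde{Y}_\tau|=|X_\tau^*|/2^{\sum c_i}$, and observe that the halving acts per cycle, so it divides each argument of $A_{k,b}$ by $2$ while leaving the prefactors $(1\pm\sqrt{q})$ (coming from the ``$+1$'' exponent in Theorem 3.3) and turning $(q-1)_k/k!$ into $\binom{(q-1)/2}{k}$. Your sign bookkeeping $(-1)^{k-\sum c_i}/2^{\sum c_i}=(-1)^k(-1/2)^{\sum c_i}$ matches the computation already carried out in the proof of Theorem 4.5.
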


Similarly, we can derive the following results of subset sums if $b=0$.
\begin{theorem}Let $\mathbb{F}_{q}$ be a finite field with $q=p^s$.\\
$(1)$ If either $p\equiv 1($mod $4)$, or  $p\equiv 3($mod $4)$ and  $s$ is even,we have
 \begin{eqnarray*}
 N_H(k,0)&=&\frac{1}{q}\binom{\frac{q-1}{2}}{k}+(-1)^k\frac{q-1}{2qk!}\{A_{k,1}(\frac{1-\sqrt{q}}{2},\frac{1+\sqrt{q}}{2},\frac{1-q}{2})\\
    & &+A_{k,1}(\frac{1+\sqrt{q}}{2},\frac{1-\sqrt{q}}{2},\frac{1-q}{2})\}.
 \end{eqnarray*}
 $(2)$ If  $p\equiv 3($mod $4)$, $s$ is odd, we have
 \begin{eqnarray*}
N_H(k,0)&=&\frac{1}{q}\binom{\frac{q-1}{2}}{k}+(-1)^k\frac{q-1}{2qk!}\{A_{k,1}(\frac{1-\sqrt{q}i}{2},\frac{1+\sqrt{q}i}{2},\frac{1-q}{2})\\
    & &+A_{k,1}(\frac{1+\sqrt{q}i}{2},\frac{1-\sqrt{q}i}{2},\frac{1-q}{2})\}.
 \end{eqnarray*}
where $i=\sqrt{-1}$.
 \end{theorem}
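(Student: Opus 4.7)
The plan is to run the same two-sieve argument that produced Theorem 5.1, replacing the input Theorem 4.5 by its $b=0$ companion Theorem 4.6.

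First I would set $\widetilde{Y}_0 = \{(y_1,\ldots,y_k)\in H^k : y_1+\cdots+y_k = 0,\ y_i\neq y_j \text{ for } i\neq j\}$, so that $N_H(k,0) = \frac{1}{k!}|\widetilde{Y}_0|$. Since $\widetilde{Y}_0$ is symmetric under the coordinate-permutation action of $S_k$, the distinct coordinate sieve (Theorem 4.1) yields
$$N_H(k,0) = \frac{1}{k!}\sum_{\sum ic_i = k}(-1)^{k-\sum c_i} N(c_1,\ldots,c_k)\,|\widetilde{Y}_{0,\tau}|,$$
where $\widetilde{Y}_{0,\tau}$ is defined in complete analogy with $\widetilde{Y}_\tau$ from Section 5, with $b$ replaced by $0$.

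Next, the $2$-to-$1$ squaring map $\mathbb{F}_q^*\to H$ (with fibers $\{\pm x\}$) induces a $2^{\sum c_i}$-to-$1$ surjection from $X_{0,\tau}^*$ onto $\widetilde{Y}_{0,\tau}$, hence $|\widetilde{Y}_{0,\tau}| = |X_{0,\tau}^*|/2^{\sum c_i}$. This is the exact analog of equation (5.2). Substituting gives
$$N_H(k,0) = \frac{1}{k!}\sum_{\sum ic_i = k}(-1)^{k-\sum c_i} N(c_1,\ldots,c_k)\frac{|X_{0,\tau}^*|}{2^{\sum c_i}}.$$

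Finally, the factor $2^{-\sum c_i} = \prod_i (1/2)^{c_i}$ slides cleanly into the generating function: since $C_k(t_1,\ldots,t_k) = \sum N(c_1,\ldots,c_k)\,t_1^{c_1}\cdots t_k^{c_k}$, replacing each $t_i$ by $t_i/2$ multiplies every term by exactly $2^{-\sum c_i}$. Applied to the expression that Theorem 4.6 furnishes for the sum $\sum(-1)^{k-\sum c_i}N(c_1,\ldots,c_k)|X_{0,\tau}^*|$, this rescaling turns the main term $(q-1)_k/q$ into $((q-1)/2)_k/q = k!\binom{(q-1)/2}{k}/q$ (by Lemma 4.3(1) applied with $a=(1-q)/2$), and halves each argument of $A_{k,1}$ from $(1\mp\sqrt q,\,1\pm\sqrt q,\,1-q)$ (or the $\sqrt{q}\,i$ analog when $p\equiv 3\pmod 4$ and $s$ is odd) to $((1\mp\sqrt q)/2,\,(1\pm\sqrt q)/2,\,(1-q)/2)$. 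Dividing by $k!$ at the end yields the formulas in parts (1) and (2). The only point requiring care is consistency of the rescaling $t_i\mapsto t_i/2$ with the character-split structure of $A_{k,1}$ (which distinguishes indices $i$ with $p\nmid i,\,\chi(i)=1$ from those with $\chi(i)=-1$ and those with $p\mid i$); this is transparent from the definition of $A_{k,b}$, so the proof reduces to essentially routine bookkeeping once Theorem 4.6 is in hand.
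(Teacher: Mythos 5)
Your proposal is correct and follows exactly the route the paper intends: the paper proves Theorem 5.2 by the same two-step argument as Theorem 5.1 (distinct coordinate sieve on $\widetilde{Y}_0$, the $2^{\sum c_i}$-to-$1$ correspondence $|\widetilde{Y}_{0,\tau}|=|X_{0\tau}^*|/2^{\sum c_i}$, and the rescaling $t_i\mapsto t_i/2$ inside the generating function), only with Theorem 4.6 supplying $|X_{0\tau}^*|$ in place of Theorem 4.5. Your bookkeeping of how the halving turns $(q-1)_k/q$ into $\frac{1}{q}\binom{(q-1)/2}{k}$ and halves each argument of $A_{k,1}$ is exactly what the paper's ``similarly'' elides.
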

Note that all the results we discussed in the above sections are all on finite fields  $\mathbb{F}_q$  with $p^s$ elements and  odd characteristic $p$.  Especially, if $s$ is even, our computational formula can be greatly simplified because of the following proposition.

\begin{prop}
Let $\mathbb{F}_q$ be a finite field with odd characteristic $p$, $q=p^s$. Let $\chi$ be the quadratic character of $\mathbb{F}_q^*$
and $\psi$ be the quadratic character of $\mathbb{F}_p^*$, then the restriction of $\chi$ to $\mathbb{F}_p^*$ is
$\psi^s$, which is trivial if $s$ is even.
\end{prop}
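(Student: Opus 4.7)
The plan is to evaluate both $\chi|_{\mathbb{F}_p^*}$ and $\psi^s$ on a single generator of $\mathbb{F}_p^*$, since any character of a cyclic group is determined by its value on a generator. The underlying arithmetic fact I will exploit is that the index $d := (q-1)/(p-1) = 1 + p + p^2 + \cdots + p^{s-1}$ satisfies $d \equiv s \pmod 2$, because $p$ is odd so each $p^i \equiv 1 \pmod 2$.

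First I would fix a generator $g$ of the cyclic group $\mathbb{F}_q^*$. Since $\mathbb{F}_p^* \subset \mathbb{F}_q^*$ is the unique subgroup of order $p-1$, it is generated by $h := g^{d}$. By definition, $\chi(g) = -1$ and $\psi(h) = -1$.

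Next I would compute both sides on $h$. On the one hand, $\chi(h) = \chi(g^d) = \chi(g)^d = (-1)^d$. On the other hand, $\psi^s(h) = \psi(h)^s = (-1)^s$. Combining with the congruence $d \equiv s \pmod 2$ derived above gives $\chi(h) = \psi^s(h)$. Because $h$ generates $\mathbb{F}_p^*$ and both $\chi|_{\mathbb{F}_p^*}$ and $\psi^s$ are multiplicative, they coincide on all of $\mathbb{F}_p^*$.

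Finally, the triviality claim for even $s$ is immediate: $\psi$ has order $2$ as a character, so $\psi^s$ is the trivial character whenever $s$ is even, and the identity just established transports this triviality to $\chi|_{\mathbb{F}_p^*}$. I do not anticipate any real obstacle here; the entire argument reduces to the elementary parity computation $1 + p + \cdots + p^{s-1} \equiv s \pmod 2$, and the only thing to be careful about is verifying that $g^d$ indeed generates the prime subfield, which follows from the standard characterization of subfields of a finite field.
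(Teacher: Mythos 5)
Your proof is correct and complete. The paper states this proposition without any proof at all (it is treated as a standard fact), so there is nothing to compare against; your argument via a generator $g$ of $\mathbb{F}_q^*$, the identification $\mathbb{F}_p^*=\langle g^d\rangle$ with $d=(q-1)/(p-1)=1+p+\cdots+p^{s-1}$, and the parity observation $d\equiv s\pmod 2$ is a clean, self-contained way to supply the missing justification. (An equally standard alternative, for what it is worth, is to write $\chi(x)=x^{(q-1)/2}$ and note that for $x\in\mathbb{F}_p^*$ one has $x^{(q-1)/2}=\bigl(x^{(p-1)/2}\bigr)^{d}=\psi(x)^{d}=\psi(x)^{s}$, or to use $\chi=\psi\circ N_{\mathbb{F}_q/\mathbb{F}_p}$ together with $N(x)=x^{s}$ for $x\in\mathbb{F}_p$; all three routes reduce to the same parity computation.)
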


In this occasion, we can derive more concise explicit formulas for $N_H(k, b)$.
\begin{cor}
Let $\mathbb{F}_{q}$ be a finite field with $q=p^s$, where $p$ is an odd prime and $s$ is even. For any $b\in \mathbb{F}_{q}^*$, we have
 \begin{eqnarray*}
& &N_H(k, b)=\frac{1}{q}\binom{\frac{q-1}{2}}{k}\\
&-&\left\{
\begin{array}{ll}\frac{(-1)^k}{2qk!}\{(1-\sqrt{q})A_{k,b}(\frac{1-\sqrt{q}}{2},\frac{1-q}{2})-
(1+\sqrt{q})A_{k,b}(\frac{1+\sqrt{q}}{2},\frac{1-q}{2})\} & \textrm{if $\chi(b) = 1$,} \\ \frac{(-1)^k}{2qk!}\{(1-\sqrt{q})A_{k,b}(\frac{1+\sqrt{q}}{2},\frac{1-q}{2})-
(1+\sqrt{q})A_{k,b}(\frac{1-\sqrt{q}}{2},\frac{1-q}{2})\}  & \textrm{if $\chi(b) = -1$.}
\end{array}\right.
 \end{eqnarray*}
 \end{cor}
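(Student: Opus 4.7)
The plan is to derive this corollary as a direct specialization of Theorem 5.1(1) using Proposition 5.3, which governs the values of the quadratic character of $\mathbb{F}_q^*$ on integers when $s$ is even. Because $s$ is even and $p$ is odd, part $(1)$ of Theorem 5.1 already applies verbatim, so the entire task is to collapse the three-variable $A_{k,b}(u,v,w)$ appearing there into the two-variable $A_{k,b}(u,w)$ introduced at the end of Definition 4.4.

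First I would invoke Proposition 5.3: the restriction of $\chi$ to $\mathbb{F}_p^*$ equals $\psi^s$, and when $s$ is even this restriction is the trivial character of $\mathbb{F}_p^*$. Since every positive integer $i$ with $p \nmid i$ represents a nonzero element of $\mathbb{F}_p$, it follows that $\chi(i) = 1$ for all such $i$.

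Next I would apply this observation to Definition 4.4. The weights $t_i$ take the value $u$ when $p \nmid i$ and $\chi(i) = \chi(b)$, and the value $v$ when $p \nmid i$ and $\chi(i) = -\chi(b)$. Having just shown $\chi(i) = 1$ for every $i$ with $p \nmid i$, precisely one of these two clauses is triggered uniformly: if $\chi(b) = 1$, then $t_i = u$ for every $p \nmid i$, so $A_{k,b}(u,v,w)$ coincides with $A_{k,b}(u,w)$ in the two-variable notation; if $\chi(b) = -1$, then $t_i = v$ for every $p \nmid i$, so $A_{k,b}(u,v,w)$ coincides with $A_{k,b}(v,w)$. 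In either case the value of $v$ (respectively $u$) is irrelevant, so the two-variable formula from Definition 4.4 applies even though the nominal hypothesis $u = v$ is not literally satisfied.

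Finally I would substitute these collapses back into the formula of Theorem 5.1(1) and handle the two cases for $\chi(b)$ separately. When $\chi(b) = 1$, the first three-variable $A$ (with $u = (1-\sqrt{q})/2$, $v = (1+\sqrt{q})/2$, $w = (1-q)/2$) reduces to $A_{k,b}((1-\sqrt{q})/2,(1-q)/2)$, while the second reduces to $A_{k,b}((1+\sqrt{q})/2,(1-q)/2)$. When $\chi(b) = -1$, the roles of the first two arguments swap inside each of the two three-variable $A$'s, producing the asserted permutation of $(1 \pm \sqrt{q})/2$ in the two-variable functions. Reading off the resulting expression in each case yields the claimed formula. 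There is no genuine obstacle in the argument; the corollary is essentially a dictionary-style translation, and the only care required is bookkeeping which two-variable function ends up in which summand after the swap in the $\chi(b) = -1$ case.
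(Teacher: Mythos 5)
Your overall strategy is exactly the intended one: Theorem 5.1(1) applies verbatim since $s$ is even, Proposition 5.3 forces $\chi(i)=1$ for every integer $i$ with $p\nmid i$, and hence in Definition 4.4 all weights $t_i$ with $p\nmid i$ collapse to the single value $u$ (when $\chi(b)=1$) or $v$ (when $\chi(b)=-1$), turning each three-variable $A_{k,b}$ into a two-variable one; your remark that the $u=v$ hypothesis need not hold literally because the other clause is never triggered is also correct. The paper gives no more detail than this, so up to that point you have reconstructed its argument.

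The problem is your final sentence: the substitution does \emph{not} yield the claimed formula. Theorem 5.1(1) has a \emph{plus} sign between the two summands inside the braces, and your collapse only renames the $A$'s, so it cannot change that sign. For $\chi(b)=1$ you obtain
$$N_H(k,b)=\frac{1}{q}\binom{\frac{q-1}{2}}{k}-\frac{(-1)^k}{2qk!}\Bigl\{(1-\sqrt q)\,A_{k,b}(\tfrac{1-\sqrt q}{2},\tfrac{1-q}{2})+(1+\sqrt q)\,A_{k,b}(\tfrac{1+\sqrt q}{2},\tfrac{1-q}{2})\Bigr\},$$
whereas the corollary as printed has a minus between the two terms (and likewise in the $\chi(b)=-1$ case); these are genuinely different quantities. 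A sanity check with $q=9$, $k=1$ and $b$ a nonzero square, where $N_H(1,b)=1$, gives $\frac{4}{9}+\frac{1}{18}(2+8)=1$ for the plus version but $\frac{1}{9}$ for the printed minus version; so your derivation produces the correct formula and the printed corollary contains a sign error (this is also consistent with Corollary 5.5, where the plus from Theorem 5.2 survives). But as a proof of the statement as literally written, your argument terminates in an expression that does not match it, and you assert the match without checking. You need either to flag the discrepancy explicitly or to correct the sign in the target statement.
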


\begin{cor}
Let $\mathbb{F}_{q}$ be a finite field with $q=p^s$, where $p$ is an odd prime and $s$ is even.  We have\\
$$ N_H(k, 0)=\frac{1}{q}\binom{\frac{q-1}{2}}{k}+(-1)^k\frac{q-1}{2qk!}\{A_{k,1}(\frac{1+\sqrt{q}}{2},\frac{1-q}{2}) +A_{k,1}(\frac{1-\sqrt{q}}{2},\frac{1-q}{2})\}.$$
\end{cor}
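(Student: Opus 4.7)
The plan is to deduce Corollary 5.5 directly from the preceding $b=0$ theorem by invoking Proposition 5.3 to collapse the three-argument generating function $A_{k,1}(u,v,w)$ into its two-argument simplification $A_{k,1}(u,w)$. No new computation with Gauss or Jacobi sums is needed; the corollary is a pure specialization of what has already been proved.

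First, I will apply Proposition 5.3: since $s$ is even, the restriction of $\chi$ to $\mathbb{F}_p^* \subset \mathbb{F}_q^*$ equals $\psi^s$, which is trivial. Hence for every integer $i$ with $1 \leq i \leq k$ and $p \nmid i$, viewing $i$ as an element of $\mathbb{F}_p^*$, we have $\chi(i) = 1$. With $b = 1$ one has $\chi(b) = 1$, so every such $i$ satisfies $\chi(i) = \chi(b)$ while none satisfies $\chi(i) = -\chi(b)$. Unwinding the definition $A_{k,1}(u,v,w) = C_k(t_1,\ldots,t_k)$, this observation shows that $t_i = u$ for all $p \nmid i$ and $t_i = w$ for all $p \mid i$, so the $v$-branch of the piecewise definition is vacuous. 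This is precisely the configuration $u = v \neq w$ for which the paper introduces the abbreviation $A_{k,1}(u,w)$, so for even $s$,
$$A_{k,1}(u,v,w) = A_{k,1}(u,w).$$

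Finally, because even $s$ places us in case (1) of the $b=0$ theorem (case (2), which requires $p \equiv 3 \pmod 4$ and $s$ odd, is excluded by hypothesis), I will apply the above identity to both $A_{k,1}$ terms appearing in the general formula,
$$A_{k,1}\!\left(\tfrac{1\pm\sqrt{q}}{2},\tfrac{1\mp\sqrt{q}}{2},\tfrac{1-q}{2}\right) = A_{k,1}\!\left(\tfrac{1\pm\sqrt{q}}{2},\tfrac{1-q}{2}\right),$$
and substitute into the general expression; the claimed formula for $N_H(k,0)$ then drops out verbatim. The argument is pure bookkeeping, and the only point that warrants care is verifying that the ``$\chi(i) = -\chi(b)$'' branch really is vacuous, which is immediate from Proposition 5.3; I therefore anticipate no real obstacle.
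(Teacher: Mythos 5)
Your proposal is correct and is precisely the argument the paper intends: Corollary 5.5 is obtained by specializing Theorem 5.2(1) (which covers all even $s$) and using Proposition 5.3 to conclude that $\chi(i)=1=\chi(1)$ for every $p\nmid i$, so each $A_{k,1}(u,v,w)$ collapses to the two-argument form $A_{k,1}(u,w)$. No gaps; the bookkeeping, including the vacuity of the $\chi(i)=-\chi(b)$ branch, is exactly as you describe.
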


\end{document}